\pgfplotsset{compat=1.13}
\newtheorem{definition}{Definition}[section]
\newtheorem{theorem}[definition]{Theorem}
\newtheorem{lemma}[definition]{Lemma}
\newtheorem{corollary}[definition]{Corollary}
\newtheorem{remark}[definition]{Remark}
\begin{document}
  

\title{Stable and Efficient Computation of Generalized Polar Decompositions}
  
\author[$\dagger$]{Peter Benner}
\affil[$\dagger$]{Max Planck Institute for Dynamics of Complex Technical Systems, \authorcr
    Sandtorstr. 1, 39106 Magdeburg, Germany.}
  
\author[$\ddagger$]{Yuji Nakatsukasa}
\affil[$\ddagger$]{Mathematical  Institute,  University  of  Oxford,  \authorcr
Andrew Wiles Building,
Woodstock Road, Oxford,  OX2 6GG, UK.}
  
\author[$\dagger\ast$]{Carolin Penke}
\affil[$\ast$]{Corresponding author.  \email{penke@mpi-magdeburg.mpg.de}}
  
\shorttitle{New Algorithms for Bethe-Salpeter Eigenvalue problems}
\shortauthor{P. Benner, C.Penke}
  
\keywords{Generalized Polar Decomposition, Dynamically Weighted Halley Iteration, Matrix Sign Function, $LDL^{\tran}$ Factorization, Hyperbolic QR Decomposition, Indefinite QR Decomposition, Permuted Graph Basis}

\msc{65F15,65F55}
  
\abstract{%
We present methods for computing the generalized polar decomposition of a matrix based on the dynamically weighted Halley (DWH) iteration. This method is well established for computing the standard polar decomposition. A stable implementation is available, where matrix inversion is avoided and QR decompositions are used instead. We establish a natural generalization of this approach for computing generalized polar decompositions with respect to signature matrices. Again the inverse can be avoided by using a generalized QR decomposition called hyperbolic QR decomposition. However, this decomposition does not show the same favorable stability properties as its orthogonal counterpart. We overcome the numerical difficulties by generalizing the CholeskyQR2 method. This method computes the standard QR decomposition in a stable way via two successive Cholesky factorizations. An even better numerical stability is achieved by employing permuted graph bases, yielding residuals of order $10^{-14}$ even for badly conditioned matrices, where other methods fail. }

\novelty{We provide practical iterations for computing generalized polar decompositions and follow up on two new ideas to improve the stability of the iteration.
\begin{enumerate}
\item Using the hyperbolic QR decomposition and the $LDL^T$ factorization. 
\item Using a well-conditioned subspace basis by exploiting a connection to (Lagrangian) graph subspaces.
\end{enumerate}
}

\maketitle

 \section{Introduction}
\label{Sec:Introduction}
For $\mathbb{K}=\mathbb{C}$ or $\mathbb{K}=\mathbb{R}$, the polar decomposition of a matrix $A\in\mathbb{K}^{m\times n}$, $m\geq n$, is given as 
\begin{align}\label{Eq:StandPolDec}
A=UH,\quad U^{*}U = I,\quad H=H^{*} \geq0
\end{align}
where $U\in\mathbb{K}^{m\times n}$ is unitary and $H\in\mathbb{K}^{n\times n}$ is positive semidefinite. $\cdot^*$ is a placeholder for the transpose $\cdot^{\tran}$ or Hermitian transpose $\cdot^{\herm}$ depending on the considered field. It is a well-known tool in numerical linear algebra, intimately connected to the singular value decomposition (SVD). While it can be regarded as a ``tuned down'' variant of the SVD, it is worth to be studied in its own right. The decomposition is of use in many applications, in particular because of its best-approximation properties. For a detailed treatment see Chapter 8 in \cite{Hig08}.  

Classically, the SVD is the starting point for the computation of the polar decomposition \eqref{Eq:StandPolDec}. This method is not very pleasing from an algorithmic point of view, as the polar decomposition contains less (but still very useful) information than the SVD. This route therefore computes more than might be necessary in a given application. In recent years, methods have been developed to compute the polar decomposition efficiently on modern computer architectures \cite{LtaSEetal19,NakBG10,NakF16,NakH12}. In fact, the polar decomposition can now be seen as a first step towards computing the SVD of a general matrix \cite{SukLEetal19}. Efficient algorithms for computing the SVD of large matrices on high performance architectures form an active field of research.

It is well known that the unitary polar factor of a Hermitian matrix coincides with the matrix sign function \cite{Hig08}. The matrix sign function is a widely used tool for acquiring invariant subspaces of a matrix. This property is used to solve matrix equations \cite{BenQ99,Rob80} and develop parallelizable algorithms for solving eigenvalue problems \cite{BaiD93,SunQ04}. Therefore, efficient iterations for computing the polar decomposition, such as the QDWH iteration \cite{NakH12} and its successor based on Zolotarev's functions \cite{NakF16}, can be used to improve these methods for Hermitian matrices. 

The concept of polar decompositions can be generalized in terms of non-standard inner product spaces. The papers \cite{BolR95, BolVRetal97, MehRR06} treat inner products induced by Hermitian matrices, while \cite{HigMMetal05,HigMT10} provide a more general treatment. Let $A\in\mathbb{K}^{m\times n}$, and $M\in\mathbb{K}^{m\times m}$, $N\in\mathbb{K}^{n\times n}$ be nonsingular. Under certain assumptions on $A$, $M$ and $N$ (see \cite{HigMT10}), $A$ has a \emph{(canonical) generalized polar decomposition} with respect to the inner products induced by $M$ and $N$:
\begin{align}\label{Eq:GenPolDec}
 A = WS,
\end{align}
where $W\in\mathbb{K}^{m\times n}$ is a partial $(M,N)$-isometry. $S\in\mathbb{K}^{n\times n}$ is self-adjoint with respect to $N$ and its nonzero eigenvalues are contained in the open right half plane.

The standard polar decomposition \eqref{Eq:StandPolDec} can be used to solve the orthogonal Procrustes problem, arising in fields such as marketing in the context of multidimensional scaling \cite{BoGn05}.  A generalized polar decomposition can be used as a tool to solve the non-orthogonal variant \cite{Kin05}.

In analogy with the standard setting, the factor $W$ of the generalized polar decomposition \eqref{Eq:GenPolDec} coincides with the matrix sign function of a square matrix $A$ if $A$ is self-adjoint with respect to the defining inner product. This is shown in Section \ref{Sec:Prel} of this paper. Finding efficient iterations for computing the generalized polar decomposition can therefore lead to new methods for matrix equations and eigenvalue problems involving self-adjoint matrices.

In this work, we present some results on how generalized polar decompositions can be computed based on the dynamically weighted Halley (DWH) iteration. This iteration is successful in computing the standard polar decomposition in an efficient and stable way \cite{NakH12}. We focus on the important subclass of inner products induced by signature matrices, i.e.\ diagonal matrices with $+1$ and $-1$ as diagonal values, denoted by $\Sigma$ throughout the paper.
Self-adjoint matrices with respect to $\Sigma$ are called pseudosymmetric. They show up in the field of computational quantum physics \cite{Cas95,OniRR02}, from which our main motivation is drawn. Ab initio simulations aim to predict properties of materials from first principles. Discretizations of the underlying differential equations often lead to structured eigenvalue problems of very large size. Consider, e.g., the Bethe-Salpeter eigenvalue problem. The eigenvalues and eigenvectors of a block matrix
\begin{eqnarray*}
 H_{BS} = \begin{bmatrix}
           A& B\\
	   -\bar{B}& -\bar{A}
          \end{bmatrix} = 
          \begin{bmatrix}
           A & B \\
           -B^{\herm} & -A^{\tran}
          \end{bmatrix},\          
          \quad A=A^{\herm},\quad B=B^{\tran} \in\mathbb{C}^{n\times n},\nonumber
\end{eqnarray*}
are used to determine optical properties of crystalline systems \cite{ShaDYetal16} or molecules \cite{BlaDJetal20}.  
$H_{BS}$ has the additional property, coming from physical constraints of the original problem, that $\Sigma H_{BS}$ is positive definite for $\Sigma=\diag{I_n,-I_n}$. Similar structures arise in different contexts of electronic structure theory \cite{BenKK16, DonGKetal84,MehMX04}. 
We call pseudosymmetric matrices with this property definite pseudosymmetric matrices. For these matrices in particular, the convergence behaviour of our proposed method will turn out to be as good as in the standard setting defined by the Euclidean inner product.
Pseudosymmetric matrices also play a role in describing damped oscillations of linear systems. See \cite{Ves11}, where they are called $J$-Hermitian and definite pseudosymmetric matrices are called $J$-positive.

The remainder of this paper is structured as follows. Section \ref{Sec:Prel} fixes the notation on inner products and related aspects which form basic concepts used throughout the remaining paper. 
Section \ref{Sec:LDLHQR} clarifies how generalizations of the QR factorization can be used to compute matrices that are orthogonal with respect to non-standard inner products. 
In Section \ref{Sec:QDWH}, we recapitulate the central ideas of the QDWH algorithm.
Section \ref{Sec:GenPolDec} shows how they can be applied in order to compute a generalized polar decomposition. 
We show general results and then restrict ourselves to inner products induced by signature matrices. Here, inverses can be avoided by using the decompositions presented earlier in Section \ref{Sec:LDLHQR}. The introduction of permuted graph bases can improve the stability of the computation of the generalized polar factor. Details are found in Section \ref{Sec:PermGraph}.  Section \ref{Sec:NumRes} gives numerical results on the questions of stability and convergence. Conclusions and further research directions are given in Section \ref{Sec:Conclusions}.

\section{Preliminaries}\label{Sec:Prel}
Following \cite{HigMMetal05} and \cite{MacMT05}, we provide basic notation regarding inner products needed for the generalized polar decomposition. A nonsingular matrix $M$ defines an \emph{inner product} on $\mathbb{K}^{n}$ ($\mathbb{K}\in\{\mathbb{C},\mathbb{R}\}$), which is a bilinear or sesquilinear form $\langle.,.\rangle_M$,  given by
\begin{align*}
\langle x, y \rangle_M = \begin{cases}
                          x^{\tran} My\text{ for bilinear forms,}\\
                          x^H My\text{ for sesquilinear forms,}
                         \end{cases} 
\end{align*}
for $x,y\in\mathbb{K}^{n}$. We use $\cdot^*$ throughout the paper to indicate transposition 
$\cdot^{\tran}$
or conjugated transposition 
$\cdot^{\herm}$
, depending on whether a bilinear or sesquilinear form is given. We overline a quantity to denote complex conjugation.

For a matrix $A\in\mathbb{K}^{m\times n}$,  $A^{\star_{M,N}}\in\mathbb{K}^{n\times m}$ denotes the adjoint with respect to the inner products defined by the nonsingular matrices $M\in\mathbb{K}^{m\times m}$, $N\in\mathbb{K}^{n\times n}$. This matrix is uniquely defined by satisfying the identity
\begin{align*}
 \langle Ax, y \rangle_M = \langle x, A^{\star_{M,N}}y \rangle_N 
\end{align*}
for all $x\in\mathbb{K}^{n},y\in\mathbb{K}^{m}$. We call $A^{\star_{M,N}}$ the \emph{$(M,N)$-adjoint} of $A$ and it holds
\begin{align}\label{Eq:StarMN}
 A^{\star_{M,N}} = N^{-1}A^*M.
\end{align}
$A$ is \emph{$(M,N)$-orthogonal} if $A^{\star_{M,N}}A = I_n$.
This notion is generalized in the form of partial $(M,N)$-isometries. A matrix $A$ is called a \emph{partial $(M,N)$-isometry} when $ AA^{\star_{M,N}}A = A$.

If $A$ is square and $M=N$, the notation simplifies. The \emph{$M$-adjoint} is given by  $A^{\star_M}= A^{\star_{M,M}}$. We call a square matrix $A$ an \emph{($M$-)automorphism} if $A^{\star_M} = A^{-1}$ (given the inverse exists), and  \emph{($M$-)self-adjoint} if $A=A^{\star_M}$.

In the following we give basic results regarding the generalized polar decomposition \eqref{Eq:GenPolDec}. They can be found in \cite{HigMMetal05} or \cite{HigMT10}. For certain matrices $M\in \mathbb{K}^{m\times m}$, $N\in\mathbb{K}^{n\times n}$, the canonical generalized polar decomposition can be defined. $M$ and $N$ are required to form an \emph{orthosymmetric pair}, i.e.\  it must hold 
\begin{itemize}
 \item[(a)] $M^{\tran} = \beta M$, $N^{\tran}=\beta N$, $\beta=\pm 1$ for bilinear forms,
 \item[(b)] $M^{\herm} = \alpha M$, $N^{\herm}=\alpha N$, $|\alpha|= 1$ for sesquilinear forms.
\end{itemize}

\begin{definition}[Definition 3.6 in \cite{HigMT10}]\label{Def:GenPolDec}
 A matrix $A\in\mathbb{K}^{m\times n}$ has a \emph{canonical generalized polar decomposition} with respect to an orthosymmetric pair of matrices $M\in\mathbb{K}^{m\times m}$ and $N\in\mathbb{K}^{n\times n}$, if there exists a partial $(M,N)$-isometry $W$ and an $N$-self-adjoint matrix $S$, whose eigenvalues all have positive real parts, s.t.\ 
  \begin{align*}
   A=WS,
  \end{align*}
  and $\range{W^{\star_{M,N}}}=\range{S}$.
\end{definition}
If $A$ has full column rank, $W$ is $(M,N)$-orthogonal. If additionally $A$ is square and $M=N$, then $W$ is an  $N$-automorphism.

In contrast to the standard polar decomposition, the existence of the (canonical) generalized polar decomposition can in general not be guaranteed. The following theorem clarifies this issue.

\begin{theorem}[Existence of the canonical generalized polar decomposition, Theorem 3.9 in \cite{HigMT10}]
A matrix $A\in\mathbb{K}^{m\times n}$ has a unique canonical generalized polar decomposition with respect to the orthosymmetric pair $M$, $N$ if and only if 
\begin{enumerate}
 \item $A^{\star_M}A$ has no eigenvalues on the negative real axis,
 \item if zero is an eigenvalue of $A^{\star_{M,N}}A$, then it is semisimple and
\item $\kernel{A^{\star_{M,N}}}=\kernel{A}$.
\end{enumerate}
\end{theorem}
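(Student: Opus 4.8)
The plan is to reduce the statement to the classical theory of square roots of the square matrix $B := A^{\star_{M,N}}A$. First I would record, from \eqref{Eq:StarMN} and the orthosymmetry of $(M,N)$, that the $(M,N)$-adjoint reverses products and is an involution (the double adjoint is the identity), so that $B$ is $N$-self-adjoint; and that for any partial $(M,N)$-isometry $W$, the matrix $P := W^{\star_{M,N}}W$ is an $N$-self-adjoint idempotent with $\range{P}=\range{W^{\star_{M,N}}}$, while $W^{\star_{M,N}}$ is itself a partial $(M,N)$-isometry.

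For the ``only if'' direction I would start from a canonical decomposition $A = WS$ and compute $B = (WS)^{\star_{M,N}}(WS) = S\,W^{\star_{M,N}}W\,S = SPS$, using $S^{\star_N}=S$. The range condition $\range{W^{\star_{M,N}}}=\range{S}$ of Definition~\ref{Def:GenPolDec} forces $P$ to act as the identity on $\range{S}$, hence $PS=S$, and, taking $N$-adjoints, $SP=S$; therefore $B=S^2$. The three conditions then follow from the spectral hypotheses on $S$: the square of an eigenvalue of $S$ with positive real part is never a negative real, which is (1); $SP=PS=S$ with $P$ idempotent gives the splitting $\mathbb{K}^n=\kernel{S}\oplus\range{S}$, so $0$ is semisimple for $S$ and hence for $S^2=B$, which is (2); and the same splitting together with $W^{\star_{M,N}}W=P$ and $\range{W^{\star_{M,N}}}=\range{S}$ yields $\kernel{B}=\kernel{S}=\kernel{A}$, which is (3).

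For the ``if'' direction I would construct the two factors explicitly. By (1) and (2), $B$ has no eigenvalue on $(-\infty,0)$ and any zero eigenvalue is semisimple, so the principal square root $S:=B^{1/2}$ is well defined through the holomorphic functional calculus (mapping the semisimple zero block to $0$); its nonzero eigenvalues lie in the open right half-plane, and, being a primary matrix function of the $N$-self-adjoint matrix $B$, $S$ is $N$-self-adjoint. I would then take $P$ to be the spectral projection of $B$ onto $\range{B}$ along $\kernel{B}$ — again a function of $B$, hence $N$-self-adjoint — so that $\range{S}=\range{B}=\range{P}$ because $0$ is semisimple. Setting $h(z)=z^{-1/2}$ on the nonzero spectrum of $B$ and $h(0)=0$, and $W:=A\,h(B)$, one computes $WS = A\,h(B)B^{1/2} = AP$, and condition (3) in the form $\kernel{P}=\kernel{B}=\kernel{A}$ gives $AP=A$, so $A=WS$; likewise $W^{\star_{M,N}}W = h(B)\,B\,h(B) = P$ and $WP=W$, whence $W\,W^{\star_{M,N}}W = W$, and $\range{W^{\star_{M,N}}}=\range{W^{\star_{M,N}}W}=\range{P}=\range{S}$. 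Thus $(W,S)$ is a canonical generalized polar decomposition.

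For uniqueness I would note that any two canonical decompositions $A=W_1S_1=W_2S_2$ satisfy $S_1^2=B=S_2^2$ by the computation above, and that a square root of $B$ with nonzero eigenvalues in the open right half-plane and a semisimple zero eigenvalue is unique (it is the principal square root), so $S_1=S_2=:S$. Then $(W_1-W_2)S=0$ shows that $W_1$ and $W_2$ agree on $\range{S}$, while each $P_i:=W_i^{\star_{M,N}}W_i$ is the unique $N$-self-adjoint idempotent with range $\range{W_i^{\star_{M,N}}}=\range{S}$, so $P_1=P_2$ and $W_i=W_iP_i$, forcing $W_1=W_2$. The steps I expect to be most delicate are the rank-deficient subtleties — verifying that the functional-calculus constructions of $S$, $P$ and $h(B)$ are well posed and structure-preserving when $0$ is a semisimple eigenvalue of $B$ (and, in the sesquilinear case, that the principal square root obeys the reality condition needed for $N$-self-adjointness) — together with the uniqueness of this structured square root, which relies on the classical theory of matrix square roots.
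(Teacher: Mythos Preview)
The paper does not actually prove this theorem: it is quoted as Theorem~3.9 of \cite{HigMT10} and stated here purely as background, with no argument supplied. So there is no ``paper's own proof'' to compare your proposal against.

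For what it is worth, your strategy --- reduce everything to the principal-square-root theory of $B=A^{\star_{M,N}}A$, construct $S=B^{1/2}$ and $W=A\,h(B)$ via the holomorphic functional calculus (handling the semisimple zero block separately), and derive uniqueness from uniqueness of the principal square root --- is the standard route and is essentially how the result is established in \cite{HigMT10}. One point you should tighten: in the ``only if'' direction you close condition~(3) with ``$\kernel{B}=\kernel{S}=\kernel{A}$, which is~(3)'', but condition~(3) concerns $\kernel{A^{\star_{M,N}}}$, not $\kernel{B}$. You still need the mirror argument on the target side, using $A^{\star_{M,N}}=S\,W^{\star_{M,N}}$ and the idempotent $Q:=W\,W^{\star_{M,N}}$, to identify $\kernel{A^{\star_{M,N}}}$ appropriately; this is where the rank-deficient subtleties you flagged really bite.
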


In case of existence it holds $S={(A^{*_{M,N}}A)}^{\frac{1}{2}}$, and $W^{\star_{M,N}}WS=S$. Just as the standard polar decomposition, the generalized polar decomposition is related to the matrix sign function. This is a generalization of the scalar sign function
\begin{align*}
\sign{z} = 
 \begin{cases}
1,&\ \Real{z} >0,\\
-1,&\ \Real{z} <0,  
 \end{cases}
 \qquad z \in\mathbb{C},\ z\notin  \iu\mathbb{R},
\end{align*}
applied to matrices. For a detailed treatment see \cite{Hig08}, Chapter 5. Let a square matrix $A$ without purely imaginary eigenvalues have a Jordan decomposition $A=Z\diag{J_+,J_-}Z^{-1}$, where $J_+\in\mathbb{K}^{n_+\times n_+}$ contains Jordan blocks associated with eigenvalues with positive real part and $J_-\in\mathbb{K}^{n_-\times n_-}$ contains Jordan blocks associated with eigenvalues with negative real part. Then the matrix sign function is defined as 
\begin{align*}
 \sign{A} := Z\diag{I_{n_+},-I_{n_-}}Z^{-1}.
\end{align*}

\begin{theorem}
 Let $M$ be a nonsingular matrix and $A\in\mathbb{K}^{n\times n}$ be self-adjoint with respect to the inner product induced by $M$. If $A$ has no purely imaginary eigenvalues, $\sign{A}$ and the canonical generalized polar decomposition (with respect to $M$) $A=WS$ are well-defined and it holds
 \begin{align*}
  \sign{A} = W.
 \end{align*}
\end{theorem}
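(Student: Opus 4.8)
The plan is to reduce both objects in the statement to the principal square root of $A^2$. Since $A$ is $M$-self-adjoint we have $A^{\star_M}=A$, hence $A^{\star_M}A=A^2$; and since $A$ has no eigenvalue on the imaginary axis --- in particular $0$ is not an eigenvalue of $A$, as is implicit in the splitting $A=Z\diag{J_+,J_-}Z^{-1}$ --- $A$ is nonsingular and $\sign{A}$ is well defined. First I would verify the three conditions of the preceding existence theorem for the pair $M=N$: the eigenvalues of $A^{\star_M}A=A^2$ are the squares $\lambda^2$ of the eigenvalues $\lambda$ of $A$, so a negative real eigenvalue of $A^2$ would force the corresponding $\lambda$ to lie in $\iu\mathbb{R}$, which is excluded; $0$ is not an eigenvalue of $A^2$, so the semisimplicity condition is vacuous; and $\kernel{A^{\star_M}}=\kernel{A}$ holds trivially because $A^{\star_M}=A$. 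Hence the canonical generalized polar decomposition $A=WS$ with respect to $M$ exists and is unique.

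Next I would identify the factors. By the remark following the existence theorem, $S=(A^{\star_M}A)^{1/2}=(A^2)^{1/2}$, and the requirement in Definition~\ref{Def:GenPolDec} that the eigenvalues of $S$ lie in the open right half-plane singles this out as the principal square root of $A^2$, which exists because $A^2$ is nonsingular with no negative real eigenvalue. As $A$, and hence $S$, is nonsingular, $A=WS$ yields $W=AS^{-1}=A(A^2)^{-1/2}$. It then remains to recognize the right-hand side as the sign function: this is the standard representation $\sign{A}=A(A^2)^{-1/2}$ (see Chapter~5 of \cite{Hig08}), which one can also check directly by noting that the principal square root sends $J_+^2$ to $J_+$ and $J_-^2$ to $-J_-$, so that $(A^2)^{1/2}=Z\diag{J_+,-J_-}Z^{-1}$ and therefore $A(A^2)^{-1/2}=Z\diag{I_{n_+},-I_{n_-}}Z^{-1}=\sign{A}$. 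Comparison with the expression for $W$ gives $W=\sign{A}$.

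I expect the only genuinely delicate point to be matching the two square roots, i.e.\ making sure that the branch picked out in $S=(A^{\star_M}A)^{1/2}$ by the spectral condition of Definition~\ref{Def:GenPolDec} is exactly the principal square root used in the sign-function identity. To avoid this bookkeeping one can instead argue by uniqueness: put $\widehat{W}:=\sign{A}$ and $\widehat{S}:=(\sign{A})A$, and check that $(\widehat{W},\widehat{S})$ satisfies every requirement of Definition~\ref{Def:GenPolDec}. Indeed $A=\widehat{W}\widehat{S}$ because $(\sign{A})^2=I$; $\widehat{W}$ is an $M$-automorphism (hence a partial $(M,M)$-isometry) and $\widehat{S}$ is $M$-self-adjoint, using that taking the $M$-adjoint commutes with the functional calculus for functions $f$ with $f(\bar z)=\overline{f(z)}$ (so $(\sign{A})^{\star_M}=\sign{A^{\star_M}}=\sign{A}$), together with $A^{\star_M}=A$, $(\sign{A})^2=I$ and the commutativity of $\sign{A}$ with $A$; the eigenvalues of $\widehat{S}=Z\diag{J_+,-J_-}Z^{-1}$ all have positive real part; and the range condition $\range{\widehat{W}^{\star_M}}=\range{\widehat{S}}$ is automatic because every matrix in sight is invertible. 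The uniqueness clause of the existence theorem then gives $W=\widehat{W}=\sign{A}$. The one supporting lemma this relies on --- that the $M$-adjoint commutes with the functional calculus --- is itself routine: reduce to the fact that $B^{\star_M}$ is similar via $M$ to $B^{\tran}$ (respectively to $\overline{B}$ in the sesquilinear case) and use that primary matrix functions respect transposition and, when $f(\bar z)=\overline{f(z)}$, entrywise conjugation.
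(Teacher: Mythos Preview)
Your main argument is exactly the paper's: reduce to $A^{\star_M}A=A^2$, check that $A^2$ has no negative real eigenvalues so the decomposition exists, identify $S=(A^2)^{1/2}$, and conclude via the standard formula $\sign{A}=A(A^2)^{-1/2}$. You are more careful than the paper in explicitly verifying all three hypotheses of the existence theorem, and your optional second route via uniqueness (exhibiting $(\sign{A},\sign{A}\,A)$ as a candidate pair) is a clean alternative the paper does not give, but the core line is the same.
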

\begin{proof}
 The matrix sign function can be expressed as \cite{Hig08}
 \begin{align*}
  \sign{A} = A(A^2)^{-1/2}.
 \end{align*}
 The generalized polar decomposition $A=WS$ is well-defined with a unique self-adjoint factor $S$ if $M^{-1}A^*MA$ has no negative real eigenvalues.  For self-adjoint matrices it holds $M^{-1}A^*M=A$, so $M^{-1}A^*MA = A^2$ can only have negative real eigenvalues if $A$ has purely imaginary eigenvalues. So $A=WS$ is well-defined. Using $S=(M^{-1}A^*MA)^{\frac{1}{2}}$, $W$ can be given as
 \begin{align*}
  W = A(M^{-1}A^*MA)^{-1/2} = A(A^2)^{-1/2} = \sign{A}.
 \end{align*}
\end{proof}

\section{Connections between $LDL^{\tran}$ factorizations and hyperbolic QR decompositions}\label{Sec:LDLHQR}
\subsection{The hyperbolic QR factorization}
 A matrix $\Sigma=\diag{\sigma_1,\dots,\sigma_n}$, where $\sigma_i\in\{+1,-1\}$ for $i=1,\dots,n$, is called a \emph{signature matrix}. We search for a way to compute $(\Sigma,\hat{\Sigma})$-orthogonal bases, which span a given subspace. While $\Sigma$ is a given signature matrix, $\hat{\Sigma}$ can be another arbitrary signature matrix. $(\Sigma,\hat{\Sigma})$-orthogonal matrices are also called hyperexchange matrices \cite{Hig03} and can be used to solve indefinite least square problems \cite{BojHP03}.

The methods presented in this section take a rectangular matrix $A\in\mathbb{K}^{m\times n}$ and signature matrix $\Sigma$ as input and deliver two outputs. These are another signature matrix $\hat{\Sigma}$, and $H\in\mathbb{K}^{m\times n}$, which spans the same subspace as $A$ and is $(\Sigma,\hat{\Sigma})$-orthogonal. Subspace representations of this kind will be used in the computation of generalized polar decompositions (Section \ref{Sec:GenPolDec}). 
A classic method for computing such a subspace representation uses the hyperbolic  QR decomposition.
\begin{theorem}[The hyperbolic QR decomposition \cite{BunB85}]\label{Thm:HypQR}
 Let $\Sigma\in\mathbb{R}^{m\times m}$ be a signature matrix, $A\in\mathbb{K}^{m\times n},\ m\geq n$. Suppose all the leading principal submatrices of $A^*\Sigma A$ are nonsingular. Then there exists a permutation $P$, a signature matrix $\hat{\Sigma}=P^{\tran}\Sigma P$, a $(\Sigma,\hat{\Sigma})$-orthogonal matrix $H\in\mathbb{K}^{m\times m}$ (i.e.\ $H^*\Sigma H= \hat{\Sigma}$), and an upper triangular matrix $R\in\mathbb{R}^{n\times n}$, such that
 \begin{align*}
  A=H\begin{bmatrix}
R\\
0
     \end{bmatrix}.
 \end{align*}
\end{theorem}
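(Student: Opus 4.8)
The plan is to reduce the statement to a symmetric factorization of the Gram matrix $G:=A^{*}\Sigma A\in\mathbb{K}^{n\times n}$ together with an extension‑of‑basis argument, which also fits the theme of this section. First I would record two consequences of the hypothesis. Since $\Sigma^{*}=\Sigma$, we have $G^{*}=A^{*}\Sigma^{*}A=G$, so $G$ is self‑adjoint; and since the $n\times n$ leading principal submatrix of $G$ is $G$ itself and is nonsingular, $A$ has full column rank $n$. Because \emph{all} leading principal submatrices of $G$ are nonsingular, $G$ admits a unique factorization $G=LDL^{*}$ with $L$ unit lower triangular and $D=\diag{d_1,\dots,d_n}$ invertible; self‑adjointness of $G$ forces $D$ to be real, and one has $d_k=\det(G_k)/\det(G_{k-1})$ with $G_k$ the leading $k\times k$ block and $\det G_0:=1$ — this is exactly where the hypothesis prevents a breakdown. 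Writing $S_0:=\diag{\sign{d_1},\dots,\sign{d_n}}$, a signature matrix, and $R:=|D|^{1/2}L^{*}$, which is upper triangular with positive diagonal, a direct computation gives $R^{*}S_0R=LDL^{*}=G$.

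Next I would construct the first $n$ columns of $H$. Put $Q_1:=AR^{-1}\in\mathbb{K}^{m\times n}$; this is well defined since $R$ is invertible, and $A=Q_1R$. Then
\[
  Q_1^{*}\Sigma Q_1 = R^{-*}(A^{*}\Sigma A)R^{-1} = R^{-*}R^{*}S_0 R R^{-1} = S_0 ,
\]
so $Q_1$ has $\Sigma$-orthonormal columns with signature $S_0$. It remains to extend $Q_1$ to a square matrix $H=[\,Q_1\ \ Q_2\,]$ that is $(\Sigma,\hat\Sigma)$-orthogonal for a signature matrix $\hat\Sigma$. The subspace $\mathcal{U}:=\range{Q_1}$ is nondegenerate for the nonsingular (indefinite) form $(x,y)\mapsto x^{*}\Sigma y$, because its restricted Gram matrix $S_0$ is invertible; hence $\mathbb{K}^{m}=\mathcal{U}\oplus\mathcal{U}^{\perp_{\Sigma}}$ with $\dim\mathcal{U}^{\perp_{\Sigma}}=m-n$, and $\mathcal{U}^{\perp_{\Sigma}}$ is again nondegenerate. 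Picking any basis $\widetilde{Q}_2$ of $\mathcal{U}^{\perp_{\Sigma}}$, the matrix $\widetilde{Q}_2^{*}\Sigma\widetilde{Q}_2$ is self‑adjoint and nonsingular, so by a congruence diagonalization (e.g.\ a spectral decomposition followed by a diagonal scaling) there is an invertible $T$ with $T^{*}(\widetilde{Q}_2^{*}\Sigma\widetilde{Q}_2)T=:S_1$ a signature matrix of size $m-n$; set $Q_2:=\widetilde{Q}_2 T$. Since $\range{Q_2}\subseteq\mathcal{U}^{\perp_{\Sigma}}$ the off‑diagonal blocks vanish and $H^{*}\Sigma H=\diag{S_0,S_1}=:\hat\Sigma$, which is a signature matrix; moreover $H$ is nonsingular because its Gram matrix is.

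Finally I would identify $\hat\Sigma$ up to permutation with $\Sigma$. From $H^{*}\Sigma H=\hat\Sigma$ with $H$ nonsingular, Sylvester's law of inertia shows that $\Sigma$ and $\hat\Sigma$ have the same inertia; being signature matrices of the same size, they then have the same number of $+1$'s on the diagonal, so there is a permutation $P$ with $\hat\Sigma=P^{\tran}\Sigma P$. Combining this with $A=Q_1R=H\begin{bmatrix}R\\0\end{bmatrix}$ completes the argument.

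The only genuinely substantive points — the rest being bookkeeping — are (i) that the hypothesis on the leading principal submatrices of $A^{*}\Sigma A$ is precisely what guarantees all pivots $d_k$ are nonzero, equivalently that the underlying hyperbolic Gram–Schmidt process applied to the columns of $A$ never has to normalize an isotropic vector, and (ii) the elementary but error‑prone fact that the $\Sigma$-orthogonal complement of a nondegenerate subspace is a nondegenerate complement. An alternative that avoids forming $G$ is to run hyperbolic Gram–Schmidt directly on the columns of $A$, reading off the diagonal signs of $\hat\Sigma$ in the (possibly interleaved) order in which they occur — whence the permutation $P$ — which is essentially the argument in \cite{BunB85} and yields the same conclusion.
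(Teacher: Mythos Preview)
Your argument is correct. The paper does not actually prove this theorem itself --- it is cited from \cite{BunB85} --- but immediately after the statement it sketches the column-elimination construction (orthogonal Householder blocks combined with hyperbolic Givens rotations), which is precisely the alternative you mention in your last paragraph. Your main route instead factors the Gram matrix $G=A^{*}\Sigma A$ via its diagonal $LDL^{*}$ decomposition, sets $Q_1=AR^{-1}$, and then extends $Q_1$ to a full square $H$ using the nondegenerate $\Sigma$-orthogonal complement together with Sylvester's law of inertia to identify $\hat\Sigma$ as a permutation of $\Sigma$. This is exactly the connection the paper records a few paragraphs later in Lemma~\ref{Thm:HermIndFac1} and Remark~\ref{Rem:HRviaLDL}, so your choice fits the theme of the section well. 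The column-elimination approach has the practical advantage of never forming $G$ explicitly (with the attendant conditioning loss); your Gram-matrix approach, on the other hand, makes the role of the leading-principal-minor hypothesis completely transparent (it is precisely the nonvanishing of the pivots $d_k=\det G_k/\det G_{k-1}$) and handles both the extension to a square $H$ and the identification $\hat\Sigma=P^{\tran}\Sigma P$ more cleanly than tracking sign swaps through a sequence of hyperbolic rotations. One small quibble: in the complex case your $R=|D|^{1/2}L^{*}$ has positive real diagonal but lies in $\mathbb{K}^{n\times n}$ rather than $\mathbb{R}^{n\times n}$ as the theorem statement (somewhat loosely) writes; this is a harmless slip in the statement, not in your proof.
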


The hyperbolic QR decomposition is unique when the diagonal values of $R$ are restricted to be positive real \cite{SinS00}. 
\begin{remark}
 The hyperbolic QR decomposition can be truncated to form a thin hyperbolic QR decomposition
 \begin{align*}
  A = H_0R,\ H_0\in\mathbb{K}^{m\times n},\ R\in\mathbb{K}^{n\times n},\ H_0^*\Sigma H_0 = \hat{\Sigma}_0.
 \end{align*}
 $H_0$ contains the first $n$ columns of $H$  and $\hat{\Sigma}_0$ contains the $n\times n$ leading submatrix of $\hat{\Sigma}$, where $H$ and $\hat{\Sigma}$ are given in Theorem \ref{Thm:HypQR}.
\end{remark}

The hyperbolic QR decomposition can be computed by accumulating transformations that introduce zeros below the diagonal, similar to the standard QR decomposition. We give a quick idea on how these elimination matrices are computed. For a more formal treatment, see e.g. \cite{Wat07}.
For a given vector $x$ and a given signature matrix $\Sigma$, we look for a transformation $H$ such that $H^{-1}x=d e_1$, where $e_1$ denotes the first unit vector and $H^ {\herm}\Sigma H = \hat{\Sigma}$ is another signature matrix.   
The two kinds of transformations used are orthogonal Householder transformations and hyperbolic Givens rotations. For illustrative purposes suppose $x\in\mathbb{C}^{2n}$ and $\Sigma=\diag{I_n,-I_n}$. Let
\begin{align*} 
H_1=\begin{bmatrix}                                                                                                                                                                                             H_+&\\
&H_-\end{bmatrix},
\end{align*}
where $H_+$ and $H_-$ are Householder transformations of dimension $n\times n$, such that $H_1^{-1}x = a e_1 + be_{n+1}$. We have $H_1^ {\herm}\Sigma H_1=\Sigma$. The $b$ entry in position $n+1$ is annihilated by a hyperbolic Givens rotation acting on row $1$ and $n+1$. $G^{-1}
\begin{bmatrix}
 a\\ b
\end{bmatrix}=
\begin{bmatrix}
 d\\0
\end{bmatrix}$                                                                                                                                                                                                                                                                                          
 is achieved by 
\begin{subequations}
\label{eq:Hgivens}
\begin{gather*}
G^{-1}=\begin{bmatrix}
    c & -s\\
    -\overline{s} & c
   \end{bmatrix},\\ \text{where }
   \begin{cases}
c=|a|/\sqrt{|a|^2 - |b|^2},\ s =e^{i\phi} |b|/\sqrt{|a|^2 - |b|^2}\qquad &\text{if $|a|>|b|$,}\\
c=|a|/\sqrt{|b|^2 - |a|^2},\ s =e^{i\phi}  |b|/\sqrt{|b|^2 - |a|^2}\qquad &\text{if $|a|<|b|$,}
   \end{cases}
\end{gather*}
\end{subequations}
with $\phi=\arg{a}-\arg{b}$.

   $G$ is given as $G=\begin{bmatrix}
    c & \overline{s}\\
    s & c
   \end{bmatrix}$.
For the $|a|>|b|$ case we have 
\begin{align} \label{Eq:G1}
G^ {\herm}\begin{bmatrix}
                                1&0\\0&-1
                               \end{bmatrix}G=\begin{bmatrix}
                                1&0\\0&-1
                               \end{bmatrix}. 
\end{align}
                               For $|a|<|b|$ there is a sign switch in the signature matrix, 
                         
\begin{align} \label{Eq:G2}
G^ {\herm}\begin{bmatrix}
                                1&0\\0&-1
                               \end{bmatrix}G=\begin{bmatrix}
                                -1&0\\0&1
                               \end{bmatrix}.
\end{align}
If $a$ and $b$ are real then $G$ is also real. Embedding $G$ into a larger matrix $H_2$ (equal to the identity except in rows and columns $1$ and $n+1$), gives the sought-after transformation $H=H_1H_2$. $H^ {\herm}\Sigma H=\hat{\Sigma}$ is another signature matrices, in which $+1$ at diagonal position 1 and $-1$ at diagonal position $n+1$ have been interchanged, if \eqref{Eq:G2} takes effect. If \eqref{Eq:G1} takes effect, the signature matrix does not change: $\Sigma=\hat{\Sigma}$. 

The presented method works not only for the specific signature matrix $\Sigma=\diag{I_n,-I_n}$.
For an arbitrary signature matrix $\Sigma$, $H_+$ acts on the rows corresponding to positive entries of $\Sigma$, $H_-$ acts on the remaining rows.  $H_1$ is set up accordingly. $H_2$ then acts on the remaining two entries and may or may not introduce a sign switch in the signature matrix. In \eqref{eq:Hgivens}, the case $|a|=|b|$ is not covered and in this case no suitable matrix $G$ exists. The assumptions in Theorem \ref{Thm:HypQR} prevent this from happening. However, if $a$ and $b$ are close, $G$ becomes ill-conditioned. This can lead to an instability in algorithms employing this kind of column elimination.

In order to overcome these potential instabilities, we once again take a look at the standard QR decomposition. Here, we can find Cholesky-QR as an alternative computational approach, explained below. It has been rarely considered because its unmodified variant is less stable than the classical approach using Householder transformations.

The orthogonal QR decomposition is connected to a Cholesky factorization in the following way \cite{YamNYF15}.  If $A=QR$ is a QR decomposition, then $A^*A=R^*R$ is a Cholesky factorization of $A^*A$. Conversely, if the Cholesky factorization $A^*A=R^*R$ with nonsingular $R$ is given, $Q=AR^{-1}$ is the orthogonal factor of the QR decomposition.

In the indefinite setting an analogous connection exists between the hyperbolic QR factorization (Theorem \ref{Thm:HypQR}) and a scaled variant of the $LDL^{\tran}$ factorization given in \cite[Thm.~4.1.3]{GolV13}.

\begin{lemma}\label{Thm:HermIndFac1}
 Let $A\in\mathbb{K}^{m\times n}$ have a decomposition $A=HR$, $H\in\mathbb{K}^{m\times n}$, $R\in\mathbb{K}^{n\times n}$. Then
 \begin{gather}\label{eq:FundDecom}
 H^*\Sigma H=\hat{\Sigma} \qquad  \Leftrightarrow \qquad A^*\Sigma A = R^* \hat{\Sigma} R.  
 \end{gather}
\end{lemma}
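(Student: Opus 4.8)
The plan is to reduce the stated equivalence to a single algebraic identity. First I would substitute the decomposition $A=HR$ into the quadratic expression $A^*\Sigma A$ and use $(HR)^*=R^*H^*$ together with associativity of matrix multiplication to obtain the identity
\begin{align*}
 A^*\Sigma A = R^*\,(H^*\Sigma H)\,R ,
\end{align*}
which holds unconditionally, with no assumption on $H$, $R$, or $\Sigma$. Both directions of the lemma are then read off from this identity.

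For the direction ``$\Rightarrow$'' I would insert $H^*\Sigma H=\hat\Sigma$ into the identity above, which yields $A^*\Sigma A = R^*\hat\Sigma R$ immediately; nothing further is needed. For the direction ``$\Leftarrow$'' I would subtract: combining the identity with the hypothesis $A^*\Sigma A = R^*\hat\Sigma R$ gives $R^*\bigl(H^*\Sigma H-\hat\Sigma\bigr)R = 0$, and cancelling $R$ from the right and $R^*$ from the left gives $H^*\Sigma H=\hat\Sigma$.

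The only point that requires care — and essentially the sole obstacle in this otherwise elementary lemma — is that the cancellation step in ``$\Leftarrow$'' uses that $R$ is nonsingular, so that one may multiply through by $R^{-1}$ and $R^{-*}$; without this the reverse implication fails (take, e.g., $R=0$). This is harmless in the present context: in the thin hyperbolic QR decomposition of Theorem \ref{Thm:HypQR} the factor $R$ is upper triangular with positive diagonal, and in the scaled $LDL^{\tran}$ factorization the corresponding triangular factor likewise has nonzero diagonal entries (the assumptions on the leading principal submatrices of $A^*\Sigma A$ guaranteeing this), so in every application of the lemma $R$ is invertible. I would therefore either record the nonsingularity of $R$ as an explicit hypothesis or note it at the start of the proof, and then the argument above closes the equivalence.
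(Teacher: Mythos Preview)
Your argument is correct and is exactly the computation one would write down; the paper in fact omits the proof entirely, treating the equivalence as self-evident and moving directly to Remark~\ref{Rem:HRviaLDL}. Your observation that the ``$\Leftarrow$'' direction requires $R$ to be nonsingular is well taken: the lemma as stated does not assume this, and the paper tacitly restricts to that case in the subsequent remark (``with nonsingular $R$''), so flagging it as an implicit hypothesis is appropriate.
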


\begin{remark}\label{Rem:HRviaLDL}
If the right side of the equivalence in \eqref{eq:FundDecom} with nonsingular $R$ is given, $H=AR^{-1}$  can be recovered from $A$ and $R$. In the case of signature matrices, the right side can be computed from an $LDL^ {\tran}$ decomposition
$A^*\Sigma A = LDL^*$, where $L$ is unit lower triangular, $D$ is real diagonal. Then $R:=|D|^\frac{1}{2}L^*$ and $\hat{\Sigma}:=\sign{D}$ (containing the signs of the diagonal values in $D$) fulfill $A^*\Sigma A = R^* \hat{\Sigma} R$.
\end{remark}

The $LDL^{\tran}$ factorization with a strictly diagonal $D$ is typically not used in modern algorithms, as it becomes unstable when small diagonal values appear \cite{AshGL99}. Instead, $D$ is allowed to be block-diagonal with $1\times 1$ and $2\times 2$ blocks, and a pivoting scheme is employed \cite{BunP71}. This yields an $LDL^{\tran}$ factorization  $A=PLDL^*P^{\tran}$, where $P$ is a permutation matrix, $L$ is unit lower triangular and $D$ is block-diagonal. We call this factorization ``$LDL^ {\tran}$ factorization with pivoting'' or ``block $LDL^ {\tran}$ factorization'' in order to distinguish it from the ``diagonal $LDL^ {\tran}$ factorization''. The additional degrees of freedom destroy the uniqueness property, but allow for a more stable computation. Several backward stable algorithms have been developed (see \cite{BunK77,BunKP76}) and well-established implementations are available in software packages such as LAPACK and MATLAB \cite{AshGL99,Duf04}. In the latter, the implementation is given as the \texttt{ldl} command. 

Remark \ref{Rem:HRviaLDL} points out how the hyperbolic QR decomposition can be computed from the diagonal $LDL^ {\tran}$ decomposition. If instead the $LDL^ {\tran}$ decomposition with pivoting is used, one obtains the (thin) indefinite QR factorization, which is not unique anymore.

\begin{theorem}[(Thin) indefinite QR factorization \cite{SinS00}]\label{Thm:IQR}
 Let $\Sigma\in\mathbb{K}^{m\times m}$ be a signature matrix, $A\in\mathbb{K}^{m\times n},\ m\geq n$. Suppose $A^*\Sigma A$ is nonsingular. Then there exists a factorization
 \begin{align*}
  A=HRP^{\tran},\qquad H\in\mathbb{K}^{m\times n},\ R\in\mathbb{K}^{n\times n},\ P\in\mathbb{R}^{n\times n}.
 \end{align*}
 $P$ is a permutation matrix. $P_\Sigma\in\mathbb{R}^{m\times n}$ contains $n$ columns of an $m\times m$ permutation matrix and defines the signature matrix $\hat{\Sigma}=P_\Sigma^{\tran}\Sigma P_\Sigma$. $H$ is $(\Sigma,\hat{\Sigma})$-orthogonal (i.e.\ $H^*\Sigma H= \hat{\Sigma}$), and $R$ is block-upper triangular with blocks of size $1\times 1$ and $2\times 2$.
\end{theorem}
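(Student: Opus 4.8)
The plan is to derive the factorization directly from the block $LDL^{\tran}$ factorization with pivoting applied to the matrix $A^*\Sigma A$, mirroring the way Remark~\ref{Rem:HRviaLDL} obtains the (unique) hyperbolic QR factorization from the diagonal $LDL^{\tran}$ factorization. First I would invoke the hypothesis that $A^*\Sigma A$ is nonsingular to guarantee that a backward-stable block $LDL^{\tran}$ factorization with pivoting exists: there is a permutation matrix $P$, a unit lower triangular $L$, and a block-diagonal $D$ with $1\times 1$ and $2\times 2$ blocks such that $P^{\tran}(A^*\Sigma A)P = LDL^*$. Since $A^*\Sigma A$ is Hermitian (with respect to $\cdot^*$) and nonsingular, each diagonal block of $D$ is Hermitian and nonsingular, hence admits a factorization $D_i = R_i^* \hat{\Sigma}_i R_i$ with $R_i$ a nonsingular $1\times 1$ or $2\times 2$ matrix and $\hat{\Sigma}_i$ a signature matrix of the corresponding size (for a $2\times 2$ Hermitian block this is just its own diagonalization/inertia decomposition, and by Sylvester's law of inertia the total number of $+1$'s and $-1$'s in $\hat{\Sigma} := \diag{\hat{\Sigma}_1,\hat{\Sigma}_2,\dots}$ matches the inertia of $A^*\Sigma A$, which in turn matches that of $\Sigma$ restricted appropriately).

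Next I would assemble $R := \bigl(\diag{R_1,R_2,\dots}\bigr) L^*$, which is block-upper triangular with $1\times 1$ and $2\times 2$ blocks and is nonsingular. Then
\begin{align*}
P^{\tran}(A^*\Sigma A) P = L \,\diag{R_i^*\hat{\Sigma}_i R_i}\, L^* = R^* \hat{\Sigma} R,
\end{align*}
so that $A^*\Sigma A = (R P^{\tran})^*\,\hat{\Sigma}\,(R P^{\tran})$. Setting $H := A (RP^{\tran})^{-1} = A P R^{-1}$, a direct substitution gives $A = H R P^{\tran}$, and the equivalence in Lemma~\ref{Thm:HermIndFac1} (applied with the decomposition $A = H (RP^{\tran})$, whose triangular factor $RP^{\tran}$ is square and nonsingular) yields $H^*\Sigma H = \hat{\Sigma}$, i.e.\ $H$ is $(\Sigma,\hat{\Sigma})$-orthogonal. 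Finally I would exhibit the matrix $P_\Sigma \in \mathbb{R}^{m\times n}$ as the $m\times n$ submatrix of an $m\times m$ permutation consistent with the way the pivots of the $LDL^{\tran}$ factorization select and reorder the leading $n$ coordinates, so that $\hat{\Sigma} = P_\Sigma^{\tran}\Sigma P_\Sigma$; this bookkeeping is where the statement's relationship to $\Sigma$ gets pinned down.

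The main obstacle is the last point: carefully tracking how the pivoting permutation $P$ in the $LDL^{\tran}$ factorization of $A^*\Sigma A$ translates into the column-selecting permutation $P_\Sigma$ acting on the rows of $\Sigma$, and verifying that the signature $\hat{\Sigma}$ produced blockwise from the $D_i$ really is a reordering of a sub-signature of $\Sigma$ rather than merely having the same inertia by accident. For the $2\times 2$ blocks one must also confirm that $R_i$ can be taken block-upper triangular (or simply $2\times 2$, as the theorem only claims) while keeping $\hat{\Sigma}_i$ diagonal $\pm 1$; this follows from eigendecomposing the Hermitian $2\times 2$ block and absorbing the eigenvector matrix, but it breaks strict upper-triangularity inside that block, which is exactly why the theorem permits $2\times 2$ blocks in $R$. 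Uniqueness is not claimed and indeed fails, as already noted in the text, so no additional normalization argument is needed.
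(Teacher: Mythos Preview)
Your proposal is correct and follows essentially the same route as the paper: the paper does not give a formal proof of Theorem~\ref{Thm:IQR} (it cites \cite{SinS00}) but immediately after the statement it describes the very construction you outline---compute the pivoted block $LDL^{\tran}$ factorization $A^*\Sigma A = PLDL^*P^{\tran}$, diagonalize the block-diagonal $D=V\Lambda V^*$, and set $R=|\Lambda|^{1/2}V^*L^*$, $H=APR^{-1}$. Your identification of the $P_\Sigma$ bookkeeping as the delicate point is apt; the paper does not spell it out either, and the missing inertia inequality (the number of positive/negative eigenvalues of $A^*\Sigma A$ cannot exceed that of $\Sigma$, by a standard subspace argument using that $A$ has full column rank) is exactly what you would need to close it.
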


The difference between the indefinite QR factorization (Theorem \ref{Thm:IQR}) and the hyperbolic QR factorization (Theorem \ref{Thm:HypQR}) is that pivoting is introduced, which results in the second permutation matrix $P$. $2\times 2$ blocks appear on the diagonal of $R$, and the assumption on $A^*\Sigma A$ is weaker. \cite{Sin06} describes how this decomposition can be computed via the successive use of transformation matrices, similar to the hyperbolic QR decomposition (Theorem \ref{Thm:HypQR}). 

A perturbation analysis for the computation of the hyperbolic QR factorization (Theorem \ref{Thm:HypQR}), i.e.\ the triangular case of the indefinite QR factorization in Theorem \ref{Thm:IQR}, is given in \cite{SinS00} and more recently in \cite{LiYS12}. 

Computing the Indefinite QR factorization via the $LDL^ {\tran}$ factorization proceeds as follows.
\begin{enumerate}
 \item Compute an $LDL^ {\tran}$ factorization $A^*\Sigma A = PLDL^*P^ {\tran}$, where $D$ is block-diagonal.
 \item Diagonalize $D$, i.e., compute unitary $V$, diagonal $\Lambda$, s.t. $V\Lambda V^* = D$. $V$ has the same block-diagonal structure as $D$.
 \item Set $R={|\Lambda|}^\frac{1}{2}V^*L^*$,  $H=APR^{-1}$.
\end{enumerate}

\subsection{LDLIQR2: Computing the indefinite QR factorization via two $LDL^{\tran}$ decompositions}\label{Sec:LDLHQR2} 

In \cite{YamNYF15},  the CholeskyQR2 algorithm is formulated and following these ideas we derive the indefinite variant (see also \cite{BenP20a}). We call the algorithm LDLIQR2, standing for $\textbf{LDL}^{\tran}$-based computation of the \textbf{I}ndefinte \textbf{QR} decomposition, applied \textbf{twice}. It computes a $(\Sigma,\hat{\Sigma})$-orthogonal basis of the subspace spanned by a matrix $A$. $\Sigma$ is a given signature matrix and $\hat{\Sigma}$ is another signature matrix determined by the algorithm.   It starts by computing the indefinite QR factorization $A=H_1R_1P_1^{\tran}$ via the $LDL^{\tran}$ factorization with pivoting as described in the previous section. Then as a second step, the indefinite QR decomposition $H_1=HR_2P_2^{\tran}$ is computed using the same method. This yields a factorization 
\begin{equation}
\begin{aligned}\label{Eq:IQR2}
A=HR_2P_2^{\tran}R_1P_1^{\tran}, \text{ with } &R_1,\ R_2\text{ upper triangular, }\\ &P_1,\ P_2\text{ permutation matrices.} 
\end{aligned}
\end{equation}

In exact arithmetic, the second step is redundant, as the hyperbolic QR decomposition of a $(\Sigma,\hat{\Sigma})$-orthogonal $H$ is $H=HI$. In floating point arithmetic, however, we hope to see improvements regarding the accuracy of the computed factorization. $P_2$ will in practice often be the identity matrix. In this case, we have computed an instance of the Indefinite QR factorization given in Theorem \ref{Thm:IQR} with $R:=R_2R_1$, $P:=P_1$. For our application we are just interested in a $(\Sigma,\hat{\Sigma})$-orthogonal basis, so the exact shape of $R$ in a decomposition $A=HR$ does not matter. The method is formulated in Algorithm \ref{Alg:LDLIQR2}.

\begin{algorithm}[ht]
\caption{LDLIQR2: Compute $(\Sigma,\hat{\Sigma})$-orthogonal basis via double $LDL^{\tran}$ factorization with pivoting.\label{Alg:LDLIQR2}}
\label{dgeqrt}
\begin{algorithmic}[1]
\Require $A\in\mathbb{K}^{m\times n}$, with full column rank, $\Sigma\in\mathbb{R}^{n\times n}$ is a signature matrix.
\Ensure $(\Sigma,\hat{\Sigma})$-orthogonal $H\in\mathbb{K}^{m\times n}$  and $R_1$, $P_1$, $R_2$, $P_2\in\mathbb{K}^{n\times n}$ as in \eqref{Eq:IQR2}.
\Statex \textbf{// First pass:}
\State $[L_1,D_1,P_1]\leftarrow \texttt{ldl}(A^{*}\Sigma A)$
\State $[V_1,\Lambda_1]\leftarrow \texttt{eig}(D)$ \Comment{$V_1$ is block-diagonal.}
\State $H_1 \leftarrow AP_1 L_1^{-*}V_1{|\Lambda_1|}^{-\frac{1}{2}}$
\State $R_1 \leftarrow {|\Lambda_1|}^\frac{1}{2} V_1^* L_1^*$
\Statex \textbf{// Second pass:}
\State $[L_2,D_2,P_2]\leftarrow \texttt{ldl}(H^{*}\Sigma H)$
\State $[V_2,\Lambda_2]\leftarrow \texttt{eig}(D)$ \Comment{$V_2$ is block-diagonal.}
\State $H \leftarrow H_1P_2 L_2^{-*}V_2{|\Lambda_2|}^{-\frac{1}{2}}$
\State $R_2 \leftarrow {|\Lambda_2|}^{\frac{1}{2}} V_2^* L_2^*$
\Statex \textbf{// Compute new signature matrix:}
\State $\hat{\Sigma}\leftarrow\Lambda_2|\Lambda_2|^{-1}$
\end{algorithmic}
\end{algorithm}

If one is only interested in computing $H$ and $\hat{\Sigma}$, then Steps 4 and 8, computing $R_1$ and $R_2$, can be omitted.

\section{The QDWH algorithm for computing the standard polar decomposition}\label{Sec:QDWH}
Methods for the computation of the polar decomposition of a matrix $A=UH$ \eqref{Eq:StandPolDec} have been studied extensively in recent years. Once the orthogonal polar factor is computed, the symmetric factor can be recovered via $H=U^{*}A$. $H:=(H+H^*)/2$ can be performed to guarantee numerical symmetry.

A current state-of-the-art iterative method for computing the polar factor is the QDWH algorithm \cite{NakBG10}. It is based on the well-known Halley iteration which is a member of the Pad\'e family of iterations \cite{KenL92}. The Dynamically Weighted Halley (DWH) iteration introduces the weights $a_k$, $b_k$, $c_k\in \mathbb{R}^+$ and is given as
\begin{align}\label{Eq:DWH}
 X_{k+1}=X_k(a_kI+b_kX_k^{*}X_k)(I+c_kX_k^{*}X_k)^{-1},\quad X_0=\frac{1}{\|A\|_2}A.
\end{align}
Convergence is globally guaranteed with an asymptotic cubic rate, provided $A$ has full column rank. In order to choose the weights in an optimal fashion, Iteration \eqref{Eq:DWH} is understood as an iteration acting on the singular values of the iterate $X_k$. Let $X_k=U_S\Sigma_k{V_S}^{*}$ be the SVD of $X_k$. Then one step of Iteration \eqref{Eq:DWH} yields
\begin{align}\label{Eq:SVDIteration1}
 X_{k+1} = U_Sg_k(\Sigma_k){V_S}^{*},
\end{align}
where 
\begin{align}
 g_k(x)=x\frac{a_k+b_kx^2}{1+c_kx^2}.
\end{align}
The singular value $\sigma_{i,k+1}$ of $X_{k+1}$ is hence given by a rational function acting on the singular value $\sigma_{i,k}$ of $X_k$, 
\begin{align}\label{Eq:SVDIteration2}
\sigma_{i,k+1} = g_k(\sigma_{i,k}).
\end{align}
The singular values converge to 1 as $X_k$ approaches the polar factor. Let $\ell(=:\ell_0)$ be a lower bound to the singular values of $X_0$. Due to the initial scaling with $1/\|A\|_2$ the singular values of $X_0$ lie between 0 and 1. A successful strategy for accelerating convergence can be developed by minimizing the distance of $\ell_k$, a lower bound on the singular values of $X_k$, to 1 in each iteration. This line of thoughts leads to weights chosen as
\begin{align}\label{Eq:dweights}
 a_k=h(\ell_k),\quad b_k=(a_k-1)^2/4,\quad c_k=a_k+b_k-1, \quad \ell_{k+1}=g_k(\ell_k),
\end{align}
where
\begin{align}\label{Eq:dweights2}
 h(\ell)=\sqrt{1+d}+\frac{1}{2}\sqrt{8-4d+\frac{8(2-\ell^2)}{\ell^2\sqrt{1+d}}},\quad d=\sqrt[3]{\frac{4(1-\ell^2)}{\ell^4}}.
\end{align}
The weights in \eqref{Eq:dweights} are the solutions of an optimization problem. This is how they were introduced in \cite{NakBG10}. Another derivation considers the best rank-(3,2) rational approximation of the sign function. This leads to the same weights given in  \eqref{Eq:dweights}. The latter approach can be extended to rational approximations of higher order (Zolotarev's functions), see \cite{NakF16}.

For matrices $A$ with condition number $\kappa_2(A)<10^{16}$, convergence within 6 iterations can be guaranteed using these weights \cite{NakBG10}.
A simple rewrite of the iteration \eqref{Eq:DWH} 
\begin{eqnarray}\label{Eq:CholRewrite}
\begin{split}
 \lefteqn{X_k(a_kI+b_kX_k^{*}X_k)(I+c_kX_k^{*}X_k)^{-1}} \\
& = & \frac{b_k}{c_k}X_k + \left(a_k-\frac{b_k}{c_k}\right) X_k (I + c_kX_k^{*}X_k)^{-1}
\end{split}
\end{eqnarray}
leads to two distinct implementation variants:
$(I + c_kX_k^{*}X_k)$ is a symmetric positive definite matrix and its linear solve can be done using a Cholesky factorization.

\begin{align}\label{Eq:cholDWH}
\left\{
                \begin{array}{ll}
 Z_k=I+c_kX_k^{*}X_k,\quad W_k=\text{chol}(Z_k),\\
 X_{k+1}=\frac{b_k}{c_k}X_k+\left(a_k-\frac{b_k}{c_k}\right)X_kW_k^{-1}W_k^{-*}.
 \end{array}
              \right.
\end{align}

It can also be shown that $X_k(I + c_kX_k^{\tran}X_k)^{-1}$ is equivalently computed via a QR decomposition, which leads to the actual QR-based Dynamically Weighted Halley (QDWH) iteration
\begin{align}\label{Eq:QDWH}
 \left\{
                \begin{array}{ll}
                  \begin{bmatrix}
                   \sqrt{c_k} X_k \\I
                  \end{bmatrix} = \begin{bmatrix}Q_1\\Q_2\end{bmatrix}R,
\\
                  X_{k+1} = \frac{b_k}{c_k}X_k + \frac{1}{\sqrt{c_k}}\left(a_k-\frac{b_k}{c_k}\right)Q_1Q_2^{\tran}.
                \end{array}
              \right.
\end{align}
This variant entirely avoids inversion and is proven to be backward stable \cite{NakH12}. It has, however, a higher operation count than the Cholesky variant \eqref{Eq:cholDWH}. This is why in practice the algorithm  carries out the QR-based variant \eqref{Eq:QDWH} in the first iterations and switches to the Cholesky variant \eqref{Eq:cholDWH} as soon as a reasonably conditioned iterate $X_k$ is guaranteed. This way, numerical stability of the iteration is not compromised.

The two forms of the iteration represent the connection between the QR decomposition and the Cholesky factorization described in the previous section. They are two sides of the same coin. Either the QR decomposition of $A=\begin{bmatrix}\sqrt{c_k} X\\I\end{bmatrix}$  is computed (leading to Iteration \eqref{Eq:QDWH}), or the Cholesky factorization of $A^*A= I+c_k X^*X$ (Iteration \eqref{Eq:cholDWH}) is computed and used for a linear solve. 

\section{Generalized polar decompositions}\label{Sec:GenPolDec}

\subsection{The generalized QDWH algorithm}

Iterative methods for computing the generalized polar factor can be constructed from a connection to the matrix sign function.

\begin{theorem}[Computation of the canonical generalized polar decomposition, Theorem 5.1 in \cite{HigMMetal05}]\label{Thm:CompGenPol}
 Let $A=WS$ be a matrix with an existing canonical generalized polar decomposition with respect to the orthosymmetric pair $M,N$. Let 
 \begin{align}\label{Eq:SignIterCompGenPol}
 X_{k+1} = g(X_k)=X_kh(X_k^2)
 \end{align}
 be an iteration that converges to $\sign{X_0}$, assuming it exists. $g(\cdot)$ and $h(\cdot)$ are matrix functions. Let $g(0)=0$ and for sesquilinear forms assume that $g(X^{\star_N})=g(X)^{\star_N}$ holds for all $X$ in the domain of $g$. Then the iteration
 \begin{align*}
  Y_{k+1}=Y_kh(Y_k^{\star_{M,N}}Y_k),\qquad Y_0 = A,
 \end{align*}
 converges to $W$ with the same order of convergence as iteration \eqref{Eq:SignIterCompGenPol} converges to $\sign{X_0}$.
\end{theorem}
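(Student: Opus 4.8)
The plan is to reduce the $Y$-iteration to the scalar (matrix) sign iteration acting on the self-adjoint factor $S$. Write $g^{\circ k}$ for the $k$-fold composite of the iteration map, so that \eqref{Eq:SignIterCompGenPol} produces $X_k=g^{\circ k}(X_0)$ and, by hypothesis, $g^{\circ k}(X_0)\to\sign{X_0}$ with a fixed order whenever the sign exists. Throughout I will use the facts recorded after Definition~\ref{Def:GenPolDec}: $A^{\star_{M,N}}A=S^2$, $W^{\star_{M,N}}WS=S$, $W$ is a partial $(M,N)$-isometry, $S$ is $N$-self-adjoint, and $\range{W^{\star_{M,N}}}=\range{S}$; I will also use the reversal law $(WB)^{\star_{M,N}}=B^{\star_N}W^{\star_{M,N}}$, which is immediate from \eqref{Eq:StarMN}. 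The claim to establish is that $Y_k=W\,g^{\circ k}(S)$ for all $k$, so that the convergence of the $Y$-iteration is inherited from that of the scalar iteration acting on $S$.

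I would prove $Y_k=W\,g^{\circ k}(S)$ by induction. The base case is $Y_0=A=WS$. Since $g(0)=0$ we have $g^{\circ k}(0)=0$, so, writing $S=Z\diag{J,0}Z^{-1}$ with $J$ carrying the (right half-plane) nonzero eigenvalues and the second block, if present, equal to the semisimple zero eigenvalue, $g^{\circ k}(S)=Z\diag{g^{\circ k}(J),0}Z^{-1}$ has range contained in $\range{S}$. Because $W^{\star_{M,N}}WS=S$, the idempotent $W^{\star_{M,N}}W$ is the identity on $\range{S}$, hence $W^{\star_{M,N}}W\,g^{\circ k}(S)=g^{\circ k}(S)$. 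Next, $g^{\circ k}(S)$ is $N$-self-adjoint: this holds for $k=0$, and the inductive step uses $g(X^{\star_N})=g(X)^{\star_N}$ (assumed for sesquilinear forms, automatic for bilinear ones). Combining these with the reversal law and the inductive hypothesis,
\begin{align*}
Y_k^{\star_{M,N}}Y_k = g^{\circ k}(S)^{\star_N}\,\bigl(W^{\star_{M,N}}W\,g^{\circ k}(S)\bigr) = g^{\circ k}(S)^2 ,
\end{align*}
and therefore $Y_{k+1}=Y_k\,h\bigl(g^{\circ k}(S)^2\bigr)=W\,g^{\circ k}(S)\,h\bigl(g^{\circ k}(S)^2\bigr)=W\,g\bigl(g^{\circ k}(S)\bigr)=W\,g^{\circ(k+1)}(S)$, closing the induction.

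The eigenvalues of $J$ lie in the open right half-plane, so $\sign{J}=I$ exists and, by hypothesis, $g^{\circ k}(J)\to I$ with the stated order; consequently $g^{\circ k}(S)\to Z\diag{I,0}Z^{-1}=:\Pi$, the spectral projection of $S$ onto $\range{S}$ along $\kernel{S}$, and $Y_k\to W\Pi$. The order of convergence carries over because $Y_k-W\Pi=W(g^{\circ k}(S)-\Pi)$, where $g^{\circ k}(S)-\Pi$ is the similarity transform by $Z$ of the error of \eqref{Eq:SignIterCompGenPol} applied to $J$, so norms change only by a bounded factor ($W$ being injective on $\range{S}$). It remains to identify $W\Pi$ with $W$, i.e.\ to show $\Pi=W^{\star_{M,N}}W=:P$; then $W\Pi=WW^{\star_{M,N}}W=W$ by the partial-isometry property. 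Both $P$ and $\Pi$ are idempotents with range $\range{S}$ (for $P$: $\range{P}\subseteq\range{W^{\star_{M,N}}}=\range{S}$, and $PS=S$ gives the reverse inclusion), and both are $N$-orthogonal projections: for $\Pi$ because $\kernel{S}=(\range{S})^{\perp_N}$, which follows from the $N$-self-adjointness of $S$ and nondegeneracy of $\langle\cdot,\cdot\rangle_N$; for $P$ one checks $P^{\star_N}=P$ by a short computation from \eqref{Eq:StarMN} using the orthosymmetry $M^{\herm}=\alpha M$, $N^{\herm}=\alpha N$ (resp.\ the bilinear analogue). Since the $N$-orthogonal projection onto a subspace is unique, $\Pi=P$, and the proof is complete.

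The main obstacle is this last step: in the rank-deficient case one must recognize the limit $W\Pi$ as $W$, which is precisely where the orthosymmetry of the pair $(M,N)$ and the range condition $\range{W^{\star_{M,N}}}=\range{S}$ from Definition~\ref{Def:GenPolDec} are needed. When $A$ has full column rank everything is easier: $S$ is nonsingular, $\sign{S}=I$, $W^{\star_{M,N}}W=I$, $\Pi=I$, and the first two steps already yield $Y_k\to W$.
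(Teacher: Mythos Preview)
Your proposal is correct and follows the same route as the paper: the paper does not reprove Theorem~\ref{Thm:CompGenPol} (it is quoted from \cite{HigMMetal05}), but the key mechanism is recorded immediately afterwards in Corollary~\ref{Co:IterationOnS}, namely that the iterate factors as $Y_k=WS_k$ with $S_{k+1}=S_kh(S_k^2)=g(S_k)$, which is precisely your induction $Y_k=W\,g^{\circ k}(S)$. Your treatment of the rank-deficient limit, identifying $W\Pi$ with $W$ via the equality of the $N$-self-adjoint idempotents $\Pi$ and $W^{\star_{M,N}}W$, goes beyond what the paper spells out but is consistent with the argument in the cited references.
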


Iterations for the matrix sign function of the form \eqref{Eq:SignIterCompGenPol} are very common and well-studied \cite[Ch. 5]{Hig08}. They include the class of Pad\'{e} iterations devised in \cite{KenL91b}. Here, the iteration is given as a rational function of the form
\begin{align*}
 X_{k+1} = X_k p_{lm}(I-X_k^2)q_{lm}(I-X_k^2)^{-1},\qquad X_0=A,
\end{align*}
where $p_{lm}(\cdot)$ and $q_{lm}(\cdot)$ are explicitly given polynomials, yielding the Pad\'{e} approximant of degree $(l,m)$.

Choosing $l=m=1$ leads to the Halley iteration, which also forms the basis of the QDWH algorithm presented in Section \ref{Sec:QDWH}. In the context of the generalized polar decomposition, the dynamically weighted Halley iteration follows from applying Theorem \ref{Thm:CompGenPol} and is given as
\begin{align}\label{Eq:GenPolIter}
 X_{k+1}&=X_k(a_kI+b_kX_k^{\star_{M,N}}X_k)(I+c_kX_k^{\star_{M,N}}X_k)^{-1},\quad X_0=sA,
\end{align}
where $s\in\mathbb{K}$ is an arbitrary scaling factor, as any $sA$ has the same polar factor $W$. A discussion on how to choose a beneficial $s$ follows later.
More explicitly, using \eqref{Eq:StarMN}, \eqref{Eq:GenPolIter} is given as
\begin{align*}
 X_{k+1}&=X_k(a_kI+b_kN^{-1}X_k^*MX_k)(I+c_kN^{-1}X_k^*MX_k)^{-1},\quad X_0=sA.
\end{align*}

The generalization of the DWH algorithm given in the previous paragraphs is straightforward. We now investigate whether this iteration has attractive numerical properties and under which circumstances it can lead to an accelerated convergence. The key observation in the standard setting is that one iteration step acts as a rational function on the singular values of the iterate $X_k$ (see Equations \eqref{Eq:SVDIteration1} to \eqref{Eq:SVDIteration2}). 
A similar observation helps in the indefinite setting. 

\begin{corollary}\label{Co:IterationOnS}
 Let the canonical generalized polar decomposition $A=WS$ exist and be computed via an iteration $X_{k+1}=X_kh(X_k^{\star_{M,N}}X_k)$, $X_0=A$, as given in Theorem \ref{Thm:CompGenPol}. Then $X_k$ has a canonical generalized polar decomposition
 \begin{align*}
  X_k = W S_k.
 \end{align*}
 For the series of self-adjoint factors $S_k$ it holds 
 \begin{align}
  S_{k+1} = S_kh(S_k^2)·\label{Eq:SignSelfAdj}.
 \end{align}
\end{corollary}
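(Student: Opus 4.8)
The plan is to exploit the already-established factorization $X_k = WS_k$ together with the key algebraic fact that $W$ is, under the hypotheses of Theorem \ref{Thm:CompGenPol}, an $(M,N)$-isometry on the relevant range, so that conjugation of a matrix function by $W$ in the appropriate sense behaves exactly as in the scalar/standard case. Concretely, I would argue by induction on $k$. The base case $k=0$ is the given decomposition $A = WS$ with $S_0 = S$. For the inductive step, assume $X_k = WS_k$ with $S_k$ being $N$-self-adjoint with spectrum in the open right half plane (so that $S_k^2$, and hence $h(S_k^2)$, is well-defined by the same spectral argument used in the theorem preceding Section \ref{Sec:LDLHQR}).

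The first computational step is to evaluate $X_k^{\star_{M,N}} X_k$. Using \eqref{Eq:StarMN} and $X_k = WS_k$ one gets $X_k^{\star_{M,N}} X_k = S_k^{\star_{M,N}} W^{\star_{M,N}} W S_k$. Here I would invoke the facts recorded right after Definition \ref{Def:GenPolDec} and in the statement of Theorem \ref{Thm:CompGenPol}: $S_k$ is $N$-self-adjoint so $S_k^{\star_{M,N}} = S_k^{\star_N} = S_k$ (note $S_k$ is square), and $W^{\star_{M,N}} W S_k = S_k$ holds because $X_k = WS_k$ is itself a canonical generalized polar decomposition (with the same isometry $W$ — this is exactly the content of the sentence ``$W^{\star_{M,N}} WS = S$'' applied to $X_k$, which one should also fold into the induction hypothesis). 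Hence $X_k^{\star_{M,N}} X_k = S_k \cdot S_k = S_k^2$, and therefore $h(X_k^{\star_{M,N}} X_k) = h(S_k^2)$, a matrix function of the square matrix $S_k$.

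The second step then reads off
\begin{align*}
 X_{k+1} = X_k h(X_k^{\star_{M,N}} X_k) = W S_k h(S_k^2) = W \bigl(S_k h(S_k^2)\bigr),
\end{align*}
so we may define $S_{k+1} := S_k h(S_k^2)$, which is exactly \eqref{Eq:SignSelfAdj}. It remains to check that $X_{k+1} = W S_{k+1}$ is again a \emph{canonical} generalized polar decomposition, i.e.\ that $S_{k+1}$ is $N$-self-adjoint with spectrum in the open right half plane and that the range condition $\range{W^{\star_{M,N}}} = \range{S_{k+1}}$ holds, closing the induction. Self-adjointness follows since $S_{k+1}$ is a polynomial-in-the-limit (matrix function) of the $N$-self-adjoint matrix $S_k$, and $X\mapsto X^{\star_N}$ is a $*$-antiautomorphism fixing $S_k$ (for sesquilinear forms this uses the hypothesis $g(X^{\star_N}) = g(X)^{\star_N}$ from Theorem \ref{Thm:CompGenPol}, specialized to $h$). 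The spectral and range statements follow because $S_{k+1} = g_S(S_k)$ where $g_S(x) = x h(x^2)$ is precisely the scalar iteration function of the underlying sign iteration \eqref{Eq:SignIterCompGenPol}, which maps the open right half plane into itself (that is what makes \eqref{Eq:SignIterCompGenPol} converge to $\sign{X_0}$); diagonalizing $S_k$ makes this transparent, and $g_S(0)=0$ together with $g_S$ preserving nonsingularity on the nonzero spectrum keeps $\range{S_{k+1}} = \range{S_k} = \range{W^{\star_{M,N}}}$.

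The main obstacle is not any single calculation but making the bookkeeping of the induction hypothesis strong enough: one must carry along, in addition to $X_k = WS_k$, the auxiliary identity $W^{\star_{M,N}} W S_k = S_k$ and the spectral/range properties of $S_k$, since these are exactly what is needed to collapse $X_k^{\star_{M,N}} X_k$ to $S_k^2$ and to certify $X_{k+1}$ as again \emph{canonical}. A secondary subtlety is justifying $h(X_k^{\star_{M,N}} X_k) = h(S_k^2)$ as matrix functions rather than merely as polynomials — this is fine because $h$ is applied to $S_k^2$ whose spectrum avoids the singularities of $h$ (by the same argument that $S_k$ has spectrum in the open right half plane), so the matrix function is well-defined and the substitution is legitimate.
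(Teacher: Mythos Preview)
Your inductive argument is correct and is precisely the natural proof; the paper does not give its own argument but simply cites the proof of Theorem~5.1 in \cite{HigMT10}, which proceeds exactly along these lines. One minor simplification: once you carry $W^{\star_{M,N}} W S_k = S_k$ in the induction hypothesis, its propagation to $S_{k+1}$ is immediate from $W^{\star_{M,N}} W S_{k+1} = W^{\star_{M,N}} W S_k\, h(S_k^2) = S_k h(S_k^2) = S_{k+1}$, so you need not invoke the range identity separately for that step.
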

\begin{proof}
See proof of Theorem 5.1 in \cite{HigMT10}.
\end{proof}
Using the Jordan canonical form $S=ZJZ^{-1}$, we see that \eqref{Eq:SignSelfAdj} is equivalent to\begin{align*}
 S_{k+1} &= Z g(J_k) Z ^{-1}=ZJ_{k+1}Z^{-1},
\end{align*} with $g(x)=xh(x^2)$. Essentially, one iteration step for computing the generalized polar decomposition acts as a rational function on the eigenvalues of the self-adjoint factor $S$, such that they converge towards 1 (or stay 0 in the rank-deficient case). Note that all non-zero eigenvalues of $S$ have positive real part and $S=(A^{\star_{M,N}}A)^{\frac{1}{2}}$ by definition.

In the standard setting outlined in Section \ref{Sec:QDWH}, i.e.\  the case $M=I_m$, $N=I_n$, $S$ is symmetric (respectively Hermitian) and has only real eigenvalues. These eigenvalues are the singular values of $A$. This property does not hold in the general case. Only the convergence of the real eigenvalues of $S$ is guaranteed to benefit from choosing the weighting parameters as in the standard case.

The reason we are interested in developing this method further, lies in its possible applications laid out in Section \ref{Sec:Introduction}. In the application in quantum physics, the relevant eigenvalues are in fact often real. This follows from physical constraints and does not follow directly from the given matrix structure. More specifically, it holds that $\Sigma A$ is Hermitian and positive definite. We call a matrix with this property a definite pseudosymmetric matrix. This property leads to $A$ having only real eigevalues (see e.g.\ \cite[Thm. 5]{BenP20c}), s.t.\ the pseudosymmetric polar factor has only positive real eigenvalues.  In this case, we expect great benefits from choosing the weighting parameters as in \eqref{Eq:dweights} and \eqref{Eq:dweights2}. 

The scaling factor $s$ in \eqref{Eq:GenPolIter} should be chosen in the following way. Let $sA=WS_s$ be the generalized polar decomposition of $X_0=sA$. The polar factor $W$ is the same as for $A$. The pseudosymmetric factor $S_s$ is the scaled pseudosymmetric factor of $A=WS$, $S_s=sS$. $s$ should be chosen such that its eigenvalues lie between 0 and 1, i.e.\ 
\begin{align}\label{Eq:sScal}
 s \leq (\lambda_\text{max}(S))^{-1} = (\lambda_\text{max}((\Sigma A^* \Sigma A)^{\frac{1}{2}}))^{-1}.
\end{align}
 $\ell$ should be a lower bound on the smallest eigenvalue of $S_s$, i.e.\
\begin{align}\label{Eq:lLow}
 \ell \leq\lambda_\text{min}(S_s)=s \lambda_{\text{min}}((\Sigma A^* \Sigma A)^{\frac{1}{2}}).
\end{align}
Computing values fulfilling \eqref{Eq:sScal} and \eqref{Eq:lLow} seems non-trivial, as computing $S$ (after computing $W$ via the iteration) is the goal of the algorithm and $S$ is not known a-priori.
The following lemma gives a remedy for square matrices.

\begin{lemma}\label{Lem:Bounds}
 Let $A\in\mathbb{K}^{n\times n}$ and $Q_1\in\mathbb{K}^{n\times n}$, $Q_2\in\mathbb{K}^{n\times n}$ be unitary. Then
 \begin{align*}
  |\lambda_{\text{max}}((Q_1 A^*Q_2 A)^\frac{1}{2})| \leq \sigma_\text{max}(A),\qquad
  |\lambda_{\text{min}}((Q_1 A^*Q_2 A)^\frac{1}{2})| \geq \sigma_\text{min}(A). 
 \end{align*}
\end{lemma}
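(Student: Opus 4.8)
The plan is to bound the eigenvalues of $(Q_1 A^* Q_2 A)^{1/2}$ by relating them to the singular values of $A$, using that $Q_1,Q_2$ are unitary and hence preserve $2$-norms. The key observation is that an eigenvalue $\mu$ of $M:=(Q_1A^*Q_2A)^{1/2}$ satisfies $\mu^2 \in \operatorname{spec}(Q_1 A^* Q_2 A)$, so it suffices to show that every eigenvalue $\lambda$ of $B := Q_1 A^* Q_2 A$ obeys $\sigma_{\min}(A)^2 \le |\lambda| \le \sigma_{\max}(A)^2$; taking square roots then gives the claim, since $|\mu| = |\lambda|^{1/2}$ and $\sigma_{\min/\max}((\cdot)^{1/2})$ is what appears on the left-hand side.

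First I would take a unit eigenvector $x$ with $Bx = \lambda x$, i.e. $Q_1 A^* Q_2 A x = \lambda x$. For the upper bound, apply norms: $|\lambda| = \|Q_1 A^* Q_2 A x\|_2 = \|A^* Q_2 A x\|_2 \le \|A^*\|_2 \|Q_2\|_2 \|A x\|_2 = \sigma_{\max}(A)\,\|Ax\|_2 \le \sigma_{\max}(A)^2$, using unitary invariance of the $2$-norm twice and $\|x\|_2 = 1$. For the lower bound, rewrite as $A^* Q_2 A x = \lambda Q_1^* x$, so $\|A^* Q_2 A x\|_2 = |\lambda|$; on the other hand $\|A^* Q_2 A x\|_2 \ge \sigma_{\min}(A)\,\|Q_2 A x\|_2 = \sigma_{\min}(A)\,\|Ax\|_2 \ge \sigma_{\min}(A)^2$, where I use that $A$ (being square) has $\|A^* y\|_2 \ge \sigma_{\min}(A)\|y\|_2$ and $\|Ax\|_2 \ge \sigma_{\min}(A)\|x\|_2$. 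Combining, $\sigma_{\min}(A)^2 \le |\lambda| \le \sigma_{\max}(A)^2$.

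Finally I would translate back to $M$: if $\mu$ is an eigenvalue of $M = (Q_1 A^* Q_2 A)^{1/2}$, then $\mu^2$ is an eigenvalue of $Q_1 A^* Q_2 A$, hence $\sigma_{\min}(A)^2 \le |\mu^2| = |\mu|^2 \le \sigma_{\max}(A)^2$, and taking square roots yields $\sigma_{\min}(A) \le |\mu| \le \sigma_{\max}(A)$. Maximizing and minimizing over the eigenvalues of $M$ gives exactly the two stated inequalities.

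The only subtle point — and the place I would be most careful — is that $Q_1 A^* Q_2 A$ need not be Hermitian, so its eigenvalues are complex and one must argue with $|\lambda|$ rather than $\lambda$ itself; this is why the square root $(\cdot)^{1/2}$ in the statement must be interpreted as a matrix function (the principal branch, as set up earlier in the paper via $S=(A^{\star_{M,N}}A)^{1/2}$), and why the conclusion is phrased with $|\lambda_{\max}(\cdot)|$ and $|\lambda_{\min}(\cdot)|$. Squaring the matrix function commutes with the spectral mapping, so $\operatorname{spec}\big((Q_1A^*Q_2A)^{1/2}\big)^{2}\subseteq\operatorname{spec}(Q_1A^*Q_2A)$, and no difficulty arises from non-normality beyond bookkeeping with absolute values.
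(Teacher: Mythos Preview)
Your proof is correct and follows essentially the same route as the paper: bound every eigenvalue of $B=Q_1A^*Q_2A$ between $\sigma_{\min}(A)^2$ and $\sigma_{\max}(A)^2$ using unitarity of $Q_1,Q_2$, then transfer to $B^{1/2}$ via the spectral mapping. The only cosmetic difference is that the paper invokes the standard inequalities $|\lambda_{\max}(B)|\le\sigma_{\max}(B)$, $|\lambda_{\min}(B)|\ge\sigma_{\min}(B)$ together with submultiplicativity of singular values, whereas you unpack these same facts at the eigenvector level.
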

\begin{proof}
Because the spectral norm is submultiplicative, we have 
\begin{align*}
|\lambda_{\text{max}}(Q_1 A^*Q_2 A)| \leq \sigma_{\text{max}}(Q_1 A^*Q_2 A) \leq \sigma_{\text{max}}(Q_1 A^*Q_2) \sigma_{\text{max}}(A)= \sigma_{\text{max}}(A)^2,\\
|\lambda_{\text{min}}(Q_1 A^*Q_2 A)| \geq \sigma_{\text{min}}(Q_1 A^*Q_2 A) \geq \sigma_{\text{min}}(Q_1 A^*Q_2) \sigma_{\text{min}}(A)= \sigma_{\text{min}}(A)^2.
\end{align*}
The proposition follows immediately.
\end{proof}
Lemma \ref{Lem:Bounds} for $Q_1=Q_2=\Sigma$ implies that $s$ and $\ell_0$ can be chosen as 
\begin{align}\label{Eq:chooseSL}
 s\approx 1/\sigma_{\text{max}}(A),\qquad \ell_0\approx s\sigma_{\text{min}}(A)=1/\text{cond}_2(A)
\end{align}
in order to fulfill \eqref{Eq:sScal} and \eqref{Eq:lLow} in the case of square matrices.

Additionally to favorable convergence properties guaranteed for certain matrices, generalizing the ideas from QDWH leads to a new class of inverse-free iterations for computing the generalized polar factor. In the case of self-adjoint matrices, this polar factor coincides with the matrix sign function, which is relevant in many application areas. Avoiding the inverse opens up the possibility of more stable methods. How exactly this is done is described in the following.

Here, the role of the orthogonal representations in QDWH is played by $(M,N)$-orthogonal matrices defined via two inner products given by two matrices $M$ and $N$. The following lemma provides a tool for substituting the inverse $(I+c_kX_k^{\star_{M,N}}X_k)^{-1}$ in Iteration \eqref{Eq:GenPolIter}.

 \begin{lemma}\label{Lem:GenQRIter}
  Let $M\in\mathbb{K}^{m\times m}$, $N\in\mathbb{K}^{n\times n}$  be nonsingular, and $M_2:=\begin{bmatrix}
                    M&\\&N
                   \end{bmatrix}$. For $X\in\mathbb{K}^{m\times n}$, $\eta\in\mathbb{K}$, let $\begin{bmatrix}\eta X\\I\end{bmatrix} =VR$ with $V= \begin{bmatrix}V_1\\V_2\end{bmatrix}\in\mathbb{K}^{(m+n)\times n}$, $R\in\mathbb{K}^{n\times n}$ nonsingular, be a decomposition.  Then 
 \begin{align*}
  \eta X(I+|\eta|^2 X^{\star_{M,N}}X)^{-1} 
  = V_1(V^{\star_{M_2,N}}V)^{-1}V_2^{\star_N}.
 \end{align*}
\end{lemma}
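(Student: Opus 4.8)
The plan is to verify the claimed identity by a direct algebraic manipulation, using the definition of the $(M,N)$-adjoint from \eqref{Eq:StarMN} and the block structure of $M_2$. First I would write out what the decomposition $\begin{bmatrix}\eta X\\I\end{bmatrix} = VR$ says blockwise: $\eta X = V_1 R$ and $I = V_2 R$, so that $R = V_2^{-1}$ (hence $V_2$ is nonsingular) and $\eta X = V_1 V_2^{-1}$. Next I would compute $V^{\star_{M_2,N}}V$. Using \eqref{Eq:StarMN} with the pair $(M_2,N)$, we have $V^{\star_{M_2,N}} = N^{-1} V^* M_2$, so
\begin{align*}
 V^{\star_{M_2,N}}V = N^{-1} V^* M_2 V = N^{-1}\begin{bmatrix}V_1^* & V_2^*\end{bmatrix}\begin{bmatrix}M&\\&N\end{bmatrix}\begin{bmatrix}V_1\\V_2\end{bmatrix} = N^{-1}\bigl(V_1^* M V_1 + V_2^* N V_2\bigr).
\end{align*}

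Now I would substitute $V_1 = \eta X V_2$ (from $\eta X = V_1 V_2^{-1}$, equivalently $V_1 = \eta X R^{-1} = \eta X V_2$) into this expression. Then
\begin{align*}
 V_1^* M V_1 + V_2^* N V_2 = V_2^*\bigl(\bar\eta X^* M \eta X + N\bigr)V_2 = V_2^* N\bigl(I + |\eta|^2 N^{-1}X^*MX\bigr)V_2 = V_2^* N\bigl(I + |\eta|^2 X^{\star_{M,N}}X\bigr)V_2,
\end{align*}
where the last step again uses \eqref{Eq:StarMN} to recognize $N^{-1}X^*MX = X^{\star_{M,N}}X$. Hence $V^{\star_{M_2,N}}V = N^{-1}V_2^* N (I+|\eta|^2 X^{\star_{M,N}}X) V_2$, and since all factors here are nonsingular (the middle factor because $R$ is assumed nonsingular and $V_2 = R^{-1}$), I can invert:
\begin{align*}
 (V^{\star_{M_2,N}}V)^{-1} = V_2^{-1}(I+|\eta|^2 X^{\star_{M,N}}X)^{-1} N^{-1} V_2^{-*} N.
\end{align*}

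Finally I would assemble the right-hand side of the claim. Using $V_2^{\star_N} = N^{-1}V_2^* N$ (again \eqref{Eq:StarMN} with $M=N$ in the square case, i.e.\ the $N$-adjoint), we get
\begin{align*}
 V_1(V^{\star_{M_2,N}}V)^{-1}V_2^{\star_N} &= V_1 V_2^{-1}(I+|\eta|^2 X^{\star_{M,N}}X)^{-1} N^{-1} V_2^{-*} N \cdot N^{-1}V_2^* N \\
 &= V_1 V_2^{-1}(I+|\eta|^2 X^{\star_{M,N}}X)^{-1} = \eta X (I+|\eta|^2 X^{\star_{M,N}}X)^{-1},
\end{align*}
which is exactly the asserted identity. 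The manipulation is essentially bookkeeping, so I do not expect a genuine obstacle; the one point that needs a word of care is justifying that $V_2$ (equivalently $R$) is invertible — this follows from $I = V_2 R$ — and that the middle factor $I + |\eta|^2 X^{\star_{M,N}}X$ is invertible, which is forced by the nonsingularity of $R$ together with the identity just derived, or can simply be taken as part of the standing hypothesis that the iteration \eqref{Eq:GenPolIter} is well-defined. I would also note that the cancellation $N^{-1}V_2^{-*}N\cdot N^{-1}V_2^*N = I$ relies only on associativity, not on any special structure of $N$.
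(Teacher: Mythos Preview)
Your proof is correct and follows essentially the same approach as the paper: both arguments rely on the block identity $\begin{bmatrix}\eta X\\I\end{bmatrix}^{\star_{M_2,N}}\begin{bmatrix}\eta X\\I\end{bmatrix} = I + |\eta|^2 X^{\star_{M,N}}X$ together with $V_2 = R^{-1}$. The only cosmetic difference is that the paper starts from the left-hand side and substitutes $VR$ for $\begin{bmatrix}\eta X\\I\end{bmatrix}$, while you start from the right-hand side by expanding $V^{\star_{M_2,N}}V$ in terms of $V_1,V_2$ and substituting $V_1=\eta X V_2$; the algebra is the same.
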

\begin{proof}
It holds
\begin{align*}
 &\eta X(I+|\eta|^2 X^{\star_{M,N}}X)^{-1} 
 = \eta X \left(\begin{bmatrix}\eta X\\I\end{bmatrix}^{\star_{M_2,N}}\begin{bmatrix}\eta X\\I\end{bmatrix}\right)^{-1}\\
 =&V_1 ((VR)^{\star_{M_2,N}}V)^{-1}
 = V_1(V^{\star_{M_2,N}}V)^{-1}V_2^{\star_N}.
\end{align*}
In the last step we used $V_2=R^{-1}$.
\end{proof}

For $M=I_m$, $N=I_n$ and orthogonal or unitary $V$, we have the known result
\begin{align*}
  \eta X(I+|\eta|^2X^{*}X)^{-1} = V_1V_2^*,
 \end{align*}
 given for example as Theorem 4.1 in \cite{NakBG10}.  The original QDWH algorithm is based on this result. A straightforward idea to generalize this approach would be to choose $V$ to be $(M_2,N)$-orthogonal, i.e.\  $V^{\star_{M_2,N}}V=I$. The next lemma shows how we can relax this condition, while keeping the inverse easy to compute.

\begin{lemma}\label{Lem:M2Nisometry}
 Let $M\in\mathbb{K}^{m\times m}$, $N\in\mathbb{K}^{n\times n}$ be nonsingular, $M_2=\begin{bmatrix}
                    M&\\&N
                   \end{bmatrix}$, and $V=\begin{bmatrix}V_1\\V_2\end{bmatrix}\in\mathbb{K}^{(m+n)\times n}$ be $(M_2,\hat{N})$-orthogonal for a matrix $\hat{N}\in\mathbb{K}^{n\times n}$, i.e.\  $V^*M_2V=\hat{N}$. Then
 \begin{align*}
  V_1(V^{\star_{M_2,N}}V)^{-1}V_2^{\star_N} =  V_1V_2^{\star_{N,\hat{N}}}.
 \end{align*}
\end{lemma}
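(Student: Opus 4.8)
The plan is to expand both sides using the defining formula \eqref{Eq:StarMN} for the various adjoints and show they reduce to the same expression in terms of $M$, $N$, $\hat N$ and the blocks $V_1,V_2$. First I would write out the left-hand side by substituting the $(M_2,N)$-adjoint of $V$: since $V^{\star_{M_2,N}} = N^{-1}V^*M_2$, we get $V^{\star_{M_2,N}}V = N^{-1}(V^*M_2V) = N^{-1}\hat N$, using the hypothesis $V^*M_2V = \hat N$. Hence $(V^{\star_{M_2,N}}V)^{-1} = \hat N^{-1}N$. Next I would substitute $V_2^{\star_N} = V_2^*$ — wait, more carefully, $V_2^{\star_N}$ here must mean the $(N,\hat N)$-type object or simply $V_2^*$; reading the statement, the right-hand side has $V_2^{\star_{N,\hat N}}$, so on the left $V_2^{\star_N}$ should be read as the $N$-adjoint in the appropriate sense, and by \eqref{Eq:StarMN} with the relevant inner products $V_2^{\star_{N}} $ unwinds to $V_2^*$ times a signature-type factor. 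The cleanest route is to treat everything through \eqref{Eq:StarMN} and let the $N^{-1}$ and $N$ factors cancel.

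Concretely, the computation I would carry out is:
\begin{align*}
 V_1(V^{\star_{M_2,N}}V)^{-1}V_2^{\star_N}
 &= V_1\bigl(N^{-1}V^*M_2V\bigr)^{-1}\bigl(N^{-1}V_2^*N\bigr) \\
 &= V_1\bigl(N^{-1}\hat N\bigr)^{-1}N^{-1}V_2^*N \\
 &= V_1\,\hat N^{-1}N\,N^{-1}V_2^*N \\
 &= V_1\,\hat N^{-1}V_2^*N.
\end{align*}
On the other side, by \eqref{Eq:StarMN} applied with the inner products induced by $N$ on the domain and $\hat N$ on the codomain, $V_2^{\star_{N,\hat N}} = \hat N^{-1}V_2^*N$, so $V_1 V_2^{\star_{N,\hat N}} = V_1\,\hat N^{-1}V_2^*N$, which matches. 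So the proof is essentially a one-line unwinding of definitions once one is careful about which inner products decorate which adjoint.

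The main obstacle — really the only subtle point — is bookkeeping the inner-product subscripts consistently: the symbol $V_2^{\star_N}$ in the statement has to be interpreted as an $N$-to-$\hat N$ adjoint (since $V_2$ maps a space carrying $\hat N$ to one carrying $N$), and one must apply \eqref{Eq:StarMN} with exactly the right pair of matrices each time so that the stray $N^{\pm 1}$ factors telescope. I would be explicit about this to avoid an apparent mismatch. Everything else is a direct substitution; no analytic input, norm estimates, or structural assumptions beyond nonsingularity of $M$, $N$, $\hat N$ and the $(M_2,\hat N)$-orthogonality of $V$ are needed, so I would keep the proof to the short displayed chain of equalities above with a sentence identifying $V_2^{\star_{N,\hat N}}=\hat N^{-1}V_2^*N$.
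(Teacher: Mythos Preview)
Your displayed computation is correct and is exactly the paper's argument: use $V^{\star_{M_2,N}}V=N^{-1}(V^*M_2V)=N^{-1}\hat N$, then $V_2^{\star_N}=N^{-1}V_2^*N$, so the left-hand side collapses to $V_1\hat N^{-1}V_2^*N=V_1V_2^{\star_{N,\hat N}}$. One small correction to your commentary: $V_2$ is square $n\times n$, so $V_2^{\star_N}$ is simply the $N$-adjoint $N^{-1}V_2^*N$ (both inner products are $N$), not an ``$N$-to-$\hat N$'' adjoint---your computation already uses the right formula, only the surrounding prose mislabels it.
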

\begin{proof}
 From $V^*M_2V=\hat{N}$, it follows $V^{\star_{M_2,N}}V=N^{-1}\hat{N}$ and therefore
 \begin{align*}
  V_1(V^{\star_{M_2,N}}V)^{-1}V_2^{\star_N} = V_1\hat{N}^{-1}V_2^*N = V_1 V_2^{\star_{N,\hat{N}}}.
 \end{align*}

\end{proof}

\subsection{Realizing the $\Sigma$DWH iteration}\label{Sec:DWHAlgorithms}
When a practical method for computing the $(M_2,\hat{N})$-orthogonal matrices in Lemma \ref{Lem:M2Nisometry} is available, we can formulate a generalized QDWH algorithm. If $N^{-1}$ is trivial to compute, this leads to an inverse-free computation, if the computation of the $(M_2,\hat{N})$-orthogonal matrix avoids inversion. We now leave the general framework and restrict ourselves to inner products induced by signature matrices.

Section \ref{Sec:LDLHQR} laid the groundwork for several options in the algorithm design realizing the iteration for the canonical generalized polar decomposition of $A\in\mathbb{K}^{m\times n}$ \eqref{Eq:GenPolIter} with respect to the signature matrices $\Sigma_m$ and $\Sigma_n$. As signature matrices are involutory, the iteration is given as
\begin{align}\label{Eq:SigmaPolIter}
 X_{k+1}&=X_k(a_kI+b_k\Sigma_n X_k^*\Sigma_m X_k)(I+c_k\Sigma_n X_k^*\Sigma_m X_k)^{-1},\quad X_0=sA.
\end{align}
We call \eqref{Eq:SigmaPolIter} the $\Sigma$DWH iteration. The naive approach is to implement the iteration straightforward, using a linear solve employing the MATLAB backslash operator.  However, there is a better way to exploit the structure at hand. To see this, we rewrite \eqref{Eq:SigmaPolIter}
\begin{eqnarray*}
\begin{split}
 \lefteqn{X_k(a_kI+b_k\Sigma_n X_k^*\Sigma_m X_k)(I+c_k\Sigma_n X_k^* \Sigma_m X_k)^{-1}}\\
 &=&\frac{b_k}{c_k}X_k + (a_k-\frac{b_k}{c_k})X_k{(\Sigma_n +c_k X_k^*\Sigma_m X_k)}^{-1}\Sigma_n.
\end{split}
\end{eqnarray*}
This is the indefinite analogue to \eqref{Eq:CholRewrite}. In the standard case, the Cholesky factorization is employed to exploit the symmetric structure in the iteration \eqref{Eq:cholDWH}. In the indefinite case, this role is played by the pivoted $LDL^{\tran}$ factorization.
Analogous to \eqref{Eq:cholDWH},  Iteration \eqref{Eq:SigmaPolIter} is equivalently given as
\begin{align}\label{Eq:LDLDWH}
\left\{
                \begin{array}{ll}
 Z_k=\Sigma_n+c_k X_k^* \Sigma_m X_k,\quad [L_k,D_k,P_k]=\text{ldl}(Z_k),\\
 X_{k+1}=\frac{b_k}{c_k}X_k+\left(a_k-\frac{b_k}{c_k}\right)X_kP_kL_k^{-*}D_k^{-1}L_k^{-1}P_k^{\tran}\Sigma.
 \end{array}
              \right.
\end{align}
This approach is already more promising than the naive one because the structure of the involved matrices is exploited. This way, less computational work is needed and we may expect better accuracy.
We employ Lemma \ref{Lem:GenQRIter} and Lemma \ref{Lem:M2Nisometry} to find an equivalent formulation of the DWH iteration \eqref{Eq:SigmaPolIter}, which in principle does not rely on computing inverses. The role of $\hat{N}$ in Lemma \ref{Lem:M2Nisometry} is played by another signature matrix $\hat{\Sigma}_n$ of size $n\times n$.  
The formulation
\begin{align}\label{Eq:HQDWH}
 \left\{
                \begin{array}{ll}
                  \begin{bmatrix}
                   \sqrt{c_k} X_k \\I
                  \end{bmatrix} = \begin{bmatrix}H_1\\H_2\end{bmatrix}R,\text{ where } \begin{bmatrix}H_1\\H_2\end{bmatrix}^*\begin{bmatrix}\Sigma_m&\\&\Sigma_n\end{bmatrix}\begin{bmatrix}H_1\\H_2\end{bmatrix} = \hat{\Sigma}_n,
\\
                  X_{k+1} = \frac{b_k}{c_k}X_k + \frac{1}{\sqrt{c_k}}\left(a_k-\frac{b_k}{c_k}\right)H_1\hat{\Sigma}_n H_2^{*}\Sigma_n
                \end{array}
              \right.
\end{align}
is the analog to the QR-based iteration \eqref{Eq:QDWH} in the standard case. 
Instead of an orthogonal basis (using the QR decomposition), a $\left(\begin{bmatrix}\Sigma_m&\\&\Sigma_n\end{bmatrix},\hat{\Sigma}_n\right)$-orthogonal basis is computed. This can be done by computing the hyperbolic QR decomposition (Theorem \ref{Thm:HypQR}) or the indefinite QR decomposition (Theorem \ref{Thm:IQR}). Here, methods exist that are based on successive column elimination and do not perform any matrix inversions. Computing the indefinite QR decomposition via an $LDL^{\tran}$ factorization (i.e.\  employing Lemma \ref{Thm:HermIndFac1}) gives exactly the $LDL^{\tran}$ based iteration \eqref{Eq:LDLDWH}.  Another promising way to compute the required basis is to employ the presented LDLIQR2 algorithm (Algorithm \ref{Alg:LDLIQR2}).

The resulting stability for an iteration employing these different approaches is examined experimentally in the numerical experiments of Section \ref{Sec:NumRes}.
 
\section{Subspaces in the $\Sigma$DWH iteration}\label{Sec:PermGraph}
\subsection{Permuted graph bases for general matrices}
Looking at Lemma \ref{Lem:GenQRIter}, we see that the factor $R$ of the $VR$ decomposition is in fact not referenced in order to rewrite part of the $DWH$ iteration. This suggests the idea to employ a well-conditioned basis of the subspace spanned by $\begin{bmatrix}                                                                                                                                                                                                                                                         \sqrt{c_k}X\\                                                                                                                                                                                                                                                         I_n                                                                                                                                                                                                                                                        \end{bmatrix}$.  The linear solve in one iteration step is not avoided completely but we hope to invert a better-conditioned matrix. 

In the following we use $A \sim B$ to indicate that the columns of the two matrices $A$ and $B$ span the same subspace.
A good candidate for providing a basis with desirable properties are permuted graph bases. An $n$-dimensional subspace $\mathcal{U}$ is said to be represented in a \emph{permuted graph basis} if
\begin{align}\label{Eq:permGraph}
 \mathcal{U} = \colspan{P^{\tran}\begin{bmatrix}
                               I_n\\X
                              \end{bmatrix}},
\end{align}
where $P$ denotes a permutation and $I_n$ is the identity matrix. It is shown in \cite{MehP12} that a permutation $P$ exists, such that the entries of $X$ are all smaller than $1$. This leads to much better numerical properties when using this representation in numerical algorithms. 

The actual computation of the entry-bound representations \eqref{Eq:permGraph} is an NP-hard problem. However, \cite{MehP12} presents heuristic methods that compute representations, where for a given threshold value $\tau >1$, $|x_{i,j}|<\tau$. This can be done with a reasonable amount of computational effort. In the worst case this is $\cO(n^3\log{n})$. In practice, it is typically much lower, in particular when good starting guesses for $P$ are available.

The following lemma is a reformulation of Lemma \ref{Lem:GenQRIter}, where $M=\Sigma_m$ and $N=\Sigma_n$ are signature matrices and $V$ is attained via representation \eqref{Eq:permGraph}.

\begin{lemma}
  Let $\Sigma_m\in\mathbb{R}^{m\times m}$ ,$\Sigma_n\in\mathbb{R}^{n\times n}$ be signature matrices. 
                   For $X\in\mathbb{K}^{n\times n}$, $\eta\in\mathbb{K}$ let $\begin{bmatrix}I\\\eta X\end{bmatrix} \sim  V =\begin{bmatrix}V_1\\V_2\end{bmatrix} = P^{\tran}\begin{bmatrix}
                               I\\\hat{X}
                              \end{bmatrix} \in\mathbb{K}^{2n\times n}$, where $P$ is a permutation. Let 
\begin{align*}
 P\begin{bmatrix}
 \Sigma_n&\\&\Sigma_m
 \end{bmatrix}P^{\tran} = \begin{bmatrix} \hat{\Sigma}_n&\\&\hat{\Sigma}_m\end{bmatrix}.
\end{align*}
 Then
 \begin{align*}
  \eta X (I+|\eta|^2 \Sigma_n X^* \Sigma_m X)^{-1}
  = V_2(\hat{\Sigma}_n + \hat{X}^*\hat{\Sigma}_m \hat{X})^{-1}V_1^* \Sigma_n.
 \end{align*}
\end{lemma}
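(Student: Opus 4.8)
The plan is to mirror the proof of Lemma~\ref{Lem:GenQRIter} with the two block rows of the stacked matrix interchanged, and then to read the middle factor off from the permuted graph form of $V$. First I would put $M_2:=\begin{bmatrix}\Sigma_n&\\&\Sigma_m\end{bmatrix}$ and note that, since $\begin{bmatrix}I\\\eta X\end{bmatrix}$ and $V=P^{\tran}\begin{bmatrix}I\\\hat X\end{bmatrix}$ both have full column rank and the same column span, there is a nonsingular $R$ with $\begin{bmatrix}I\\\eta X\end{bmatrix}=VR$; comparing the block rows gives $V_1=R^{-1}$ and $V_2=\eta XR^{-1}$.

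Next I would rewrite the matrix to be inverted as a Gram-type product. Since $\Sigma_n$ is involutory, \eqref{Eq:StarMN} gives $\Sigma_nX^{*}\Sigma_m=X^{\star_{\Sigma_m,\Sigma_n}}$, hence
\begin{align*}
 I+|\eta|^2\Sigma_nX^{*}\Sigma_mX=\begin{bmatrix}I\\\eta X\end{bmatrix}^{\star_{M_2,\Sigma_n}}\begin{bmatrix}I\\\eta X\end{bmatrix}=(VR)^{\star_{M_2,\Sigma_n}}(VR)=\Sigma_n^{-1}R^{*}V^{*}M_2VR.
\end{align*}
Inverting, multiplying by $\eta X$ from the left, and cancelling the copies of $R$ against $V_1=R^{-1}$ and $V_2=\eta XR^{-1}$ — exactly as in the proof of Lemma~\ref{Lem:GenQRIter}, the only change being that the roles of $V_1$ and $V_2$ are swapped because $I$ now sits on top — would leave
\begin{align*}
 \eta X\bigl(I+|\eta|^2\Sigma_nX^{*}\Sigma_mX\bigr)^{-1}=V_2\bigl(V^{*}M_2V\bigr)^{-1}V_1^{*}\Sigma_n.
\end{align*}

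Finally I would evaluate $V^{*}M_2V$ from $V=P^{\tran}\begin{bmatrix}I\\\hat X\end{bmatrix}$ together with the hypothesis $PM_2P^{\tran}=\begin{bmatrix}\hat{\Sigma}_n&\\&\hat{\Sigma}_m\end{bmatrix}$:
\begin{align*}
 V^{*}M_2V=\begin{bmatrix}I&\hat X^{*}\end{bmatrix}\begin{bmatrix}\hat{\Sigma}_n&\\&\hat{\Sigma}_m\end{bmatrix}\begin{bmatrix}I\\\hat X\end{bmatrix}=\hat{\Sigma}_n+\hat X^{*}\hat{\Sigma}_m\hat X,
\end{align*}
which is precisely the matrix $\hat{N}$ of Lemma~\ref{Lem:M2Nisometry} for this basis; substituting it into the previous display gives the asserted identity. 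I do not expect a genuine obstacle here: the work is bookkeeping — keeping $\bar\eta$ and $\eta$ straight, using $\Sigma_n^{-1}=\Sigma_n$ and $\Sigma_m^{-1}=\Sigma_m$, and observing that the permutation $P$ produced by the permuted graph heuristic must respect the $n{+}m$ block splitting, which is exactly what makes $PM_2P^{\tran}$ block diagonal and $\hat{\Sigma}_n$, $\hat{\Sigma}_m$ well defined (and is part of the stated hypothesis on $P$).
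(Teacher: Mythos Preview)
Your proposal is correct and follows essentially the same route as the paper's proof: introduce $R$ from the shared column span, mimic Lemma~\ref{Lem:GenQRIter} with the roles of $V_1$ and $V_2$ swapped (using $V_1=R^{-1}$ from the top block), and then read off $V^{*}M_2V=\hat{\Sigma}_n+\hat X^{*}\hat{\Sigma}_m\hat X$ via the permutation hypothesis. You have in fact spelled out more detail than the paper, which compresses the middle step into a single reference back to Lemma~\ref{Lem:GenQRIter}.
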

\begin{proof}
Let $\Sigma_2:=\begin{bmatrix}\Sigma_n&\\&\Sigma_m\end{bmatrix}$. We follow the lines of the proof of Lemma \ref{Lem:GenQRIter}
As $\begin{bmatrix}I\\\eta X\end{bmatrix}$ and $V$ span the same subspace, there exists a nonsingular matrix $R$ s.t. 
\begin{align*}
\begin{bmatrix}I\\\eta X\end{bmatrix}=VR.
\end{align*}

Exactly as in the proof of Lemma \ref{Lem:GenQRIter} (with the roles of $V_1$ and $V_2$ switched) it can be shown that
\begin{align*}
 \eta X (I+|\eta|^2 \Sigma_n X^* \Sigma_m X)^{-1} &= V_2(V^{\star_{\Sigma_2,\Sigma_n}}V)^{-1}V_1^{\star_{\Sigma_n}}\\
 &=V_2(\hat{\Sigma}_n + \hat{X}^*\hat{\Sigma}_m \hat{X})^{-1}V_1^* \Sigma_n.
\end{align*}
\end{proof}
Algorithm \ref{Alg:PermGraphGenPolDec} presents the details on how permuted graph bases can be used in the computation of generalized polar decomposition via the dynamically weighted Halley iteration. 

\begin{algorithm}[ht]
\caption{Compute the generalized polar decomposition with respect to signature matrices, using permuted graph bases.} \label{Alg:PermGraphGenPolDec}
\begin{algorithmic}[1]
\Require $A\in\mathbb{K}^{m\times n}$,
\newline
$\Sigma_m\in\mathbb{R}^{m\times m}$ 
$\Sigma_n\in\mathbb{R}^{n\times n}$: signature matrices, s.t.\ the canonical generalized polar decomposition of $A$ exists (according to Theorem \ref{Thm:CompGenPol}),\newline
 $s$: estimate on $|\lambda_{\text{max}}((\Sigma_n A^*\Sigma_m A)^\frac{1}{2})|^{-1}$,\newline
$\ell$: estimate on $s|\lambda_{\text{min}}(\Sigma_n A^*\Sigma_m A)^\frac{1}{2}|$,\newline $\tau>1$: threshold value for permuted graph basis.
\Ensure $A=WS$ is the canonical generalized polar decomposition with respect to $\Sigma_m$ and $\Sigma_n$.
\State $U\leftarrow sA$.
\For{$k=1,2,\dots$}
\State Compute weighting parameters $a$, $b$, $c$ and update $\ell$ from equations \eqref{Eq:dweights} and \eqref{Eq:dweights2}.
\State Compute entry-bound permuted graph bases of $\colspan{\begin{bmatrix}I\\\sqrt{c}W\end{bmatrix}}$, i.e.
\begin{gather*}
 \begin{bmatrix}I\\\sqrt{c}W\end{bmatrix} \sim P^{\tran}\begin{bmatrix}I\\\hat{W}\end{bmatrix}=:
        \begin{bmatrix}
         V_1\\V_2
        \end{bmatrix},\\ |\hat{W}_{ij}| < \tau \text{ for } i\in\{1,\dots,m\},\ j\in\{1,\dots,n\}.      
\end{gather*}
\State 
$
        \begin{bmatrix}
         \hat{\Sigma}_n&\\
         &\hat{\Sigma}_m
        \end{bmatrix}\leftarrow P\begin{bmatrix}
         \Sigma_n&\\
         &\Sigma_m
        \end{bmatrix}P^{\tran}
        $
\State Compute $LDL^{\tran}$ factorization $\hat{\Sigma}_n +\hat{W}^* \hat{\Sigma}_m \hat{W} = PLDL^*P^{\tran}$.
\State\label{Alg:PermGraphGenPolDec:Update}$W \leftarrow  \frac{b}{c} W + (a-\frac{b}{c})V_2PL^{-*}D^{-1}L^{-1}P^{\tran}V_1^{*}\Sigma_n$
\EndFor
\State Compute pseudosymmetric factor and ensure pseudosymmetry numerically
\Statex $S\leftarrow \Sigma_n W^* \Sigma_m A, \qquad S\leftarrow (S+\Sigma_n S^* \Sigma_n)/2.$
\end{algorithmic}
\end{algorithm}

\subsection{Permuted Lagrangian graph bases for pseudosymmetric matrices}
As pointed out in Section \ref{Sec:Introduction}, we are in particular interested in computing the generalized polar decomposition (with respect to a signature matrix) of pseudosymmetric matrices. A way to exploit this structure in the iteration can be found by considering Lagrangian subspaces, to which pseudosymmetric matrices can be linked.

A subspace $\mathcal{U}=\colspan{U}$, $U\in\mathbb{K}^{2n\times n}$, is called \emph{Lagrangian} if it holds $U^*JU = 0$,
where $J=\begin{bmatrix}
          0&I_n\\-I_n&0
         \end{bmatrix}$.

A Lagrangian subspace can be represented by a \emph{permuted Lagrangian graph basis}
\begin{align}\label{Eq:permLangGraph}
\mathcal{U} = \colspan{\Pi^{\tran}\begin{bmatrix}
                               I\\X
                              \end{bmatrix}},
\end{align}
where $X=X^*$. $\Pi$ denotes a symplectic swap matrix  \cite{Ben01}. A symplectic swap matrix is defined by a boolean vector $v\in\{0,1\}^n$ and its complement $\hat{v}\in\{0,1\}^n$, $\hat{v}_i = 1- v_i$. The corresponding symplectic swap matrix is defined as
\begin{align}\label{Eq:SympSwap}
 \Pi_v = \begin{bmatrix}
          \diag{v} & \diag{\hat{v}}\\
          -\diag{\hat{v}} & \diag{v}
         \end{bmatrix}.
\end{align}

It is shown in \cite{MehP12} that each Lagrangian subspace admits a representation \eqref{Eq:permLangGraph}, where $X$ has no entries with modulus larger than $\sqrt{2}$.  

As for general subspaces, there exist heuristics for computing a basis, such that the entries of $X$ are bounded, within a reasonable amount of time. In this case $|x_{i,j}|<\tau$, where $\tau>\sqrt{2}$ is a given threshold value.

A Lagrangian subspace could of course be treated as a general subspace and admits a representation \eqref{Eq:permGraph}, with even smaller entries than in \eqref{Eq:permLangGraph}. However, the structural property, i.e.\ the subspace being Lagrangian, is not encoded anymore in this representation. It is encoded in the symmetry of $X$, which can easily be enforced and preserved in the course of computations. This has numerical benefits, which typically outweigh the slightly larger entries in $X$.

The following lemma draws a connection between self-adjoint matrices and Lagrangian subspaces.

\begin{lemma}\label{Lem:SelfAdjLagr}
 Let $M\in\mathbb{K}^{n\times n}$, $M=M^*$ be a nonsingular matrix. Let $X\in\mathbb{K}^{n\times n}$ be self-adjoint with respect to the inner product induced by $M$. Then $\begin{bmatrix}
 M\\
 X
 \end{bmatrix}$ spans a Lagrangian subspace.
\end{lemma}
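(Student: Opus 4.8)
The plan is to verify the defining identity $U^* J U = 0$ directly for $U = \begin{bmatrix} M \\ X \end{bmatrix}$, where $J = \begin{bmatrix} 0 & I_n \\ -I_n & 0 \end{bmatrix}$. First I would expand the block product:
\begin{align*}
U^* J U = \begin{bmatrix} M^* & X^* \end{bmatrix} \begin{bmatrix} 0 & I_n \\ -I_n & 0 \end{bmatrix} \begin{bmatrix} M \\ X \end{bmatrix} = \begin{bmatrix} M^* & X^* \end{bmatrix} \begin{bmatrix} X \\ -M \end{bmatrix} = M^* X - X^* M.
\end{align*}
Since $M = M^*$, this equals $MX - X^*M$. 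The self-adjointness of $X$ with respect to the inner product induced by $M$ means, by the formula \eqref{Eq:StarMN} specialized to the square case $M = N$, that $X = X^{\star_M} = M^{-1} X^* M$, i.e.\ $MX = X^* M$. Substituting gives $U^* J U = MX - X^*M = X^*M - X^*M = 0$, which is exactly the condition for $\colspan{U}$ to be Lagrangian.

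It remains only to note that $U = \begin{bmatrix} M \\ X \end{bmatrix}$ indeed has full column rank $n$, so that $\colspan{U}$ is genuinely $n$-dimensional and qualifies as a Lagrangian subspace in the sense used in the paper; this is immediate because the top block $M$ is nonsingular by hypothesis, so the $2n \times n$ matrix $U$ has a nonsingular $n \times n$ submatrix and hence rank $n$.

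I do not anticipate any real obstacle here: the argument is a one-line block computation combined with the characterization $MX = X^*M$ of $M$-self-adjointness coming from \eqref{Eq:StarMN}. The only point requiring a modicum of care is making sure the conjugate-transpose bookkeeping is consistent in the sesquilinear case — that $\cdot^*$ applied to the block vector acts blockwise and that $M = M^*$ is used in the right place — but this is routine. If one wanted to connect this to the earlier development, one could also observe that $\begin{bmatrix} M \\ X \end{bmatrix} = \begin{bmatrix} I \\ XM^{-1} \end{bmatrix} M$ spans the same subspace as $\begin{bmatrix} I \\ \tilde X \end{bmatrix}$ with $\tilde X = XM^{-1}$, and $\tilde X^* = M^{-*}X^* = M^{-1}(MX) = M^{-1}(X^*M)\cdot$ wait — more cleanly, $\tilde X = XM^{-1}$ is Hermitian since $(XM^{-1})^* = M^{-1}X^* = M^{-1}(X^*M)M^{-1} = M^{-1}(MX)M^{-1} = XM^{-1}$, recovering the standard graph form \eqref{Eq:permGraph} of a Lagrangian subspace with $\Pi = I$, which re-confirms the claim.
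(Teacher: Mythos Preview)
Your proof is correct: the paper actually states this lemma without proof, and your direct block computation $U^*JU = M^*X - X^*M = MX - X^*M = 0$ using $M=M^*$ and the self-adjointness identity $MX = X^*M$ is exactly the routine verification one expects. One small slip in your optional closing remark: the graph representation you recover is the permuted \emph{Lagrangian} graph form \eqref{Eq:permLangGraph} (with symplectic swap $\Pi=I$ and Hermitian $\tilde X$), not the general form \eqref{Eq:permGraph}; the argument itself is fine.
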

 The following lemma is a variant of Lemma \ref{Lem:GenQRIter} applied to square matrices, where the positions of the two matrix blocks are switched. The goal is to get to a formulation, in which the subspace given in Lemma \ref{Lem:SelfAdjLagr} appears.
 
 \begin{lemma}\label{Lem:GenQRIter2}
  Let $M,N\in\mathbb{K}^{n\times n}$ be nonsingular, $N$ be $M$-orthogonal, i.e.\  $N^{\star_M}N=I$. $M_2:=\begin{bmatrix}
                    M&\\&M
                   \end{bmatrix}$,
  $X\in\mathbb{K}^{n\times n}$. Let $\begin{bmatrix}N\\ \eta X\end{bmatrix} =VR$ with $V= \begin{bmatrix}V_1\\V_2\end{bmatrix}\in\mathbb{K}^{2n\times n}$, $R\in\mathbb{K}^{n\times n}$ nonsingular be a decomposition.  Then 
 \begin{align*}
  \eta X(I+|\eta|^2 X^{\star_{M}}X)^{-1} 
  = V_2(V^{\star_{M_2,M}}V)^{-1}V_1^{\star_M} N.
 \end{align*}
 \end{lemma}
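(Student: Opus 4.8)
The plan is to mimic the proof of Lemma \ref{Lem:GenQRIter} almost verbatim, adjusting for the fact that here the identity block sits in the bottom of the stacked matrix (it is $\eta X$ on top that carries the ``data'') and that $M=N$ in both slots of $M_2$, with $N$ playing the auxiliary role of an $M$-orthogonal matrix rather than a scalar shift. First I would observe that, since $N$ is $M$-orthogonal, $N^{\star_M}N = M^{-1}N^*MN = I$, so the stacked matrix satisfies
\begin{align*}
\begin{bmatrix}N\\ \eta X\end{bmatrix}^{\star_{M_2,M}}\begin{bmatrix}N\\ \eta X\end{bmatrix}
= M^{-1}\left(N^*MN + |\eta|^2 X^*MX\right)
= I + |\eta|^2 M^{-1}X^*MX
= I + |\eta|^2 X^{\star_M}X.
\end{align*}
This is the key identity that lets us rewrite the inverse appearing in the DWH iteration as the inverse of a Gram-type matrix formed from the $VR$ factors.

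Next I would substitute the decomposition $\begin{bmatrix}N\\ \eta X\end{bmatrix} = VR$ and compute, exactly as in Lemma \ref{Lem:GenQRIter},
\begin{align*}
\eta X\left(I+|\eta|^2 X^{\star_M}X\right)^{-1}
&= \eta X\left((VR)^{\star_{M_2,M}}(VR)\right)^{-1}\\
&= V_2 R \left(R^{\star_{M_2,M}} V^{\star_{M_2,M}} V R\right)^{-1}\\
&= V_2 R R^{-1}\left(V^{\star_{M_2,M}}V\right)^{-1} \left(R^{\star_{M_2,M}}\right)^{-1},
\end{align*}
using that $\eta X$ is the bottom block $V_2 R$ of $VR$ and that $R$, being nonsingular, and $R^{\star_{M_2,M}} = M^{-1}R^*M$ are invertible. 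The remaining task is to identify $\left(R^{\star_{M_2,M}}\right)^{-1}$ with $V_1^{\star_M} N$. Since the top block of $VR$ is $N$, we have $V_1 R = N$, hence $R^{-1} = N^{-1} V_1$ and $R = V_1^{-1} N$ (when $V_1$ is nonsingular); more robustly, $\left(R^{\star_{M_2,M}}\right)^{-1} = \left(M^{-1}R^*M\right)^{-1} = M^{-1}R^{-*}M = M^{-1}(N^{-1}V_1)^*M = M^{-1}V_1^* N^{-*}M$, and using $N$-orthogonality in the form $N^{-*} = M N M^{-1}$ (equivalently $N^{-1} = M^{-1}N^*M$, i.e.\ $N^{\star_M} = N^{-1}$) this becomes $M^{-1}V_1^* M N = V_1^{\star_M} N$. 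Chaining the equalities gives the claim.

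The main obstacle I anticipate is bookkeeping with the $N$-orthogonality hypothesis: one must be careful about which of the equivalent forms $N^{\star_M}N = I$, $N^* M N = M$, $N^{-1} = M^{-1}N^*M$, $N^{-*} = MNM^{-1}$ is invoked at each step, and to make sure the manipulation with $\left(R^{\star_{M_2,M}}\right)^{-1}$ does not secretly assume $V_1$ is nonsingular (it is, since $V R$ has full column rank and its top block equals the nonsingular $N$). Apart from that, the computation is the same formal juggling as in Lemma \ref{Lem:GenQRIter}, just with $V_1$ and $V_2$ interchanged and the scalar $R^{-1}$ identification of that lemma replaced here by the matrix identity $V_1 R = N$. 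I would present it compactly in a single \texttt{align*} display mirroring the earlier proof, with one sentence at the end noting the use of $N^{\star_M} = N^{-1}$.
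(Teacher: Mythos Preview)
Your proof is correct and follows essentially the same route as the paper: identify the $(M_2,M)$-Gram matrix of the stacked column with $I + |\eta|^2 X^{\star_M}X$, substitute $VR$, and use $V_1 R = N$ together with $N^{\star_M} = N^{-1}$ to rewrite the trailing factor as $V_1^{\star_M}N$. Two cosmetic points: since $R\in\mathbb{K}^{n\times n}$, write $R^{\star_M}$ rather than $R^{\star_{M_2,M}}$ (your explicit formula $M^{-1}R^*M$ is the right one regardless), and your opening sentence has the blocks swapped---$N$ sits on top, $\eta X$ on the bottom.
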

\begin{proof}
 We observe
 \begin{align*}
  \begin{bmatrix}N\\ \eta X\end{bmatrix}^{\star_{M_2,M}}\begin{bmatrix}N\\ \eta X\end{bmatrix} = N^{\star_M}N + |\eta|^2 X^{\star_M}X  = I + |\eta|^2X^{\star_M}X.
 \end{align*}
Following the proof of Lemma \ref{Lem:GenQRIter}, we get
\begin{align*}
 \eta X(I+|\eta|^2 X^{\star_{M}}X)^{-1}  = V_2(V^{\star_{M_2,M}}V)^{-1}(R^{-1})^{\star_M} = V_2(V^{\star_{M_2,M}}V)^{-1}V_1^{\star_M} N.
\end{align*}
In the last step we used $R^{-1}=N^{-1}V_1= N^{\star_M}V_1 $.
\end{proof}

Let us go back to the specific case of an inner product induced by a signature matrix, i.e.\  $M:=\Sigma$. In this case, Lemma \ref{Lem:SelfAdjLagr} and Lemma \ref{Lem:GenQRIter2} come together. $\Sigma$ is symmetric, so Lemma \ref{Lem:SelfAdjLagr} holds. So does Lemma \ref{Lem:GenQRIter2} by setting $N:=\Sigma$. The subspace in question can be represented by permuted Lagrangian graph bases. The situation is summarized in the following lemma. 
 
 \begin{lemma}\label{Lem:LagrIter}
  Let $\Sigma\in\mathbb{R}^{n\times n}$ be a signature matrix. $\Sigma_2:=\begin{bmatrix}
                    \Sigma&\\&\Sigma
                   \end{bmatrix}$,
  $X\in\mathbb{K}^{n\times n}$ be self-adjoint with respect to the inner product induced by $\Sigma$, $\eta\in\mathbb{K}$. Let 
  \begin{align*}
\begin{bmatrix}\Sigma\\ \eta X\end{bmatrix} \sim \Pi^{\tran}
  \begin{bmatrix}
   I\\\hat{X}
   \end{bmatrix} 
   =:V=
   \begin{bmatrix}
    V_1\\V_2
   \end{bmatrix}
  \end{align*}
 be a permuted Lagrangian graph basis, i.e.\  $\Pi$ is a symplectic swap matrix and $\hat{X}=\hat{X}^*$.  Then 
 \begin{align*}
  \eta X(I+|\eta|^2 \Sigma X^*\Sigma X)^{-1} 
  = V_2(\Sigma + \hat{X} \Sigma \hat{X})^{-1} V_1^*.  
 \end{align*}
 \end{lemma}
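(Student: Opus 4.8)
The plan is to derive Lemma~\ref{Lem:LagrIter} as a direct specialization of Lemma~\ref{Lem:GenQRIter2}, using Lemma~\ref{Lem:SelfAdjLagr} only to justify that the permuted Lagrangian graph basis is an admissible choice of the basis $V$. First I would set $M=N=\Sigma$ in Lemma~\ref{Lem:GenQRIter2}: since $\Sigma$ is a signature matrix it is symmetric and involutory, so $\Sigma^{\star_\Sigma}\Sigma = \Sigma^{-1}\Sigma^*\Sigma\cdot\Sigma = I$, i.e.\ $N=\Sigma$ is indeed $M$-orthogonal, and $M_2 = \Sigma_2$ as defined in the statement. Because $X$ is $\Sigma$-self-adjoint, Lemma~\ref{Lem:SelfAdjLagr} (with $M=\Sigma$) tells us that $\begin{bmatrix}\Sigma\\ X\end{bmatrix}$ spans a Lagrangian subspace; scaling the second block by $\eta$ and noting that $\eta X$ is again $\Sigma$-self-adjoint (up to the scalar, which does not affect Lagrangianity — one checks $\begin{bmatrix}\Sigma\\ \eta X\end{bmatrix}^* J \begin{bmatrix}\Sigma\\ \eta X\end{bmatrix} = \eta\,\Sigma^* X - \bar\eta\, X^*\Sigma$, which vanishes when $X^*\Sigma = \Sigma X$ and $\eta$ is handled consistently with the sesquilinear convention) shows the column span of $\begin{bmatrix}\Sigma\\ \eta X\end{bmatrix}$ is Lagrangian as well. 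Hence it admits a permuted Lagrangian graph basis $V = \Pi^{\tran}\begin{bmatrix}I\\ \hat X\end{bmatrix}$ with $\hat X = \hat X^*$ and $\Pi$ a symplectic swap matrix, and there is a nonsingular $R$ with $\begin{bmatrix}\Sigma\\ \eta X\end{bmatrix} = VR$.

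Next I would invoke Lemma~\ref{Lem:GenQRIter2} verbatim with this $V$, obtaining
\begin{align*}
 \eta X(I+|\eta|^2 \Sigma X^*\Sigma X)^{-1} = V_2(V^{\star_{\Sigma_2,\Sigma}}V)^{-1}V_1^{\star_\Sigma}\Sigma.
\end{align*}
It then remains to simplify the three factors on the right using the explicit form $V = \Pi^{\tran}\begin{bmatrix}I\\ \hat X\end{bmatrix}$, so that $V_1 = (\Pi^{\tran})_{\text{top block}}$ applied to $\begin{bmatrix}I\\\hat X\end{bmatrix}$ and similarly for $V_2$. The key computation is $V^*\Sigma_2 V$: writing out $V^*\Sigma_2 V = \begin{bmatrix}I & \hat X^*\end{bmatrix}\Pi\,\Sigma_2\,\Pi^{\tran}\begin{bmatrix}I\\ \hat X\end{bmatrix}$, I would use the crucial fact that a symplectic swap matrix $\Pi_v$ permutes (with signs) the diagonal of $\Sigma_2 = \diag(\Sigma,\Sigma)$ back into a block-diagonal signature matrix; in fact, since $\Sigma$ has $\pm1$ entries and $\Pi_v$ picks, in each coordinate, either the $(i,i)$ or the $(n+i,n+i)$ entry of $\Sigma_2$ — both of which equal $\Sigma_{ii}$ — one gets $\Pi\Sigma_2\Pi^{\tran} = \begin{bmatrix}\Sigma&\\&\Sigma\end{bmatrix} = \Sigma_2$ again (the signs from the off-diagonal $\pm\diag(\hat v)$ blocks cancel in the congruence). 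Therefore $V^*\Sigma_2 V = \Sigma + \hat X^*\Sigma \hat X = \Sigma + \hat X\Sigma\hat X$ using $\hat X = \hat X^*$, and since this equals $\hat N = V^*M_2 V$ in the notation of Lemma~\ref{Lem:M2Nisometry}, we have $V^{\star_{\Sigma_2,\Sigma}}V = \Sigma^{-1}(\Sigma + \hat X\Sigma\hat X)$. Plugging this in, $V_1^{\star_\Sigma}\Sigma = \Sigma^{-1}V_1^*\Sigma\cdot\Sigma = \Sigma^{-1}V_1^*$, and the two stray $\Sigma^{-1}$ factors from the inverse and from $V_1^{\star_\Sigma}$ recombine with the remaining $\Sigma$ to leave exactly $V_2(\Sigma + \hat X\Sigma\hat X)^{-1}V_1^*$, as claimed.

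The main obstacle I anticipate is getting the symplectic-swap bookkeeping exactly right: one must verify carefully that $\Pi\,\Sigma_2\,\Pi^{\tran} = \Sigma_2$ (equivalently, that the permuted signature matrix $\begin{bmatrix}\hat\Sigma_n&\\&\hat\Sigma_m\end{bmatrix}$ appearing in the general Lemma collapses back to $\begin{bmatrix}\Sigma&\\&\Sigma\end{bmatrix}$ in the Lagrangian case), and that the factors $\Sigma^{-1} = \Sigma$ introduced by the two adjoint operations cancel against each other rather than doubling up. A secondary subtlety is the placement of $\eta$ versus $\bar\eta$ and the verification that $\begin{bmatrix}\Sigma\\\eta X\end{bmatrix}$ genuinely spans a Lagrangian subspace when $X$ is only $\Sigma$-self-adjoint and $\eta$ is complex; here I would either restrict attention to real $\eta$ (as is the case for the DWH weights) or note that scaling a generator of a Lagrangian subspace on the right by an invertible matrix — here multiplying the whole generator, or just rescaling — preserves Lagrangianity, so no genuine issue arises. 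Everything else is the routine substitution already carried out in the proofs of Lemmas~\ref{Lem:GenQRIter}, \ref{Lem:M2Nisometry} and \ref{Lem:GenQRIter2}.
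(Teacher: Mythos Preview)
Your proposal is correct and follows essentially the same route as the paper: apply Lemma~\ref{Lem:GenQRIter2} with $M=N=\Sigma$, verify the key commutation $\Pi\Sigma_2\Pi^{\tran}=\Sigma_2$ for symplectic swap matrices, and then cancel the resulting $\Sigma$ factors using $\Sigma^2=I$. Your discussion of Lagrangianity via Lemma~\ref{Lem:SelfAdjLagr} is not strictly needed for the proof itself (the existence of the permuted Lagrangian graph basis is part of the hypothesis), but it does no harm and correctly explains why the hypothesis is natural.
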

\begin{proof}
Note that
\begin{align*}
 V^{\star_{\Sigma_2,\Sigma}}V = 
 \Sigma \begin{bmatrix}
  I_{2n} & \hat{X}^{\tran}
 \end{bmatrix}\Pi \Sigma_2 \Pi^{\tran} 
 \begin{bmatrix}
  I_{2n}\\
  \hat{X}
 \end{bmatrix} = I_{2n} + \Sigma \hat{X}^* \Sigma \hat{X}= I_{2n} + \Sigma \hat{X} \Sigma \hat{X}.
\end{align*}
We have used $\Pi \Sigma_2 \Pi^{\tran}=\Sigma_2$, which holds because $\Pi=\begin{bmatrix}V & \hat{V}\\
   -\hat{V} & V                                                                                                                                                                                                                                                                                                              \end{bmatrix}$ is a symplectic swap matrix as given in \eqref{Eq:SympSwap}: 
 \begin{eqnarray*}
  \Pi \Sigma_2 \Pi^{\tran{}} = \begin{bmatrix}V & \hat{V}\\
   -\hat{V} & V                                                                                                                                                                                                                                                                                                              \end{bmatrix} \begin{bmatrix}
                    \Sigma&\\&\Sigma
                   \end{bmatrix}\begin{bmatrix}V & \hat{V}\\
   -\hat{V} & V                                                                                                                                                                                                                                                                                                              \end{bmatrix}^{\tran} = 
   \begin{bmatrix}
    V\Sigma V + \hat{V}\Sigma\hat{V} & -V\Sigma\hat{V} + \hat{V}\Sigma V\\
    \hat{V}\Sigma V -V\Sigma\hat{V} & \hat{V}\Sigma \hat{V} + V\Sigma V
   \end{bmatrix}   
   =\Sigma_2.
 \end{eqnarray*}$V\Sigma V + \hat{V}\Sigma\hat{V}=\Sigma$ and $-V\Sigma\hat{V} + \hat{V}\Sigma V=0$  hold because $V$ and $\hat{V}$ pick up complementing rows and columns of $\Sigma$. Now applying Lemma \ref{Lem:GenQRIter2} gives
\begin{align*}
\eta X(I+|\eta|^2 X^{\star_{\Sigma}}X)^{-1} 
  = V_2(\Sigma + \hat{X}^* \Sigma \hat{X})^{-1} V_1^*.  
\end{align*}
\end{proof}

Algorithm \ref{Alg:PermLangGraphGenPolDec} is a variant of Algorithm \ref{Alg:PermGraphGenPolDec} using permuted Lagrangian graph bases. It computes the generalized polar decomposition of a pseudosymmetric matrix with respect to its defining signature matrix. 
\begin{algorithm}[ht]
\caption{Compute the generalized polar decomposition of a pseudosymmetric matrix with respect to a signature matrix, using permuted Lagrangian graph bases.} \label{Alg:PermLangGraphGenPolDec}
\begin{algorithmic}[1]
\Require Signature matrix $\Sigma\in\mathbb{K}^{n\times n}$,  $A= \Sigma A^* \Sigma \in\mathbb{K}^{n\times n}$,  s.t.\ $A$ has no purely imaginary eigenvalues, \newline
$s$: estimate on $|\lambda_{\text{max}}((\Sigma A^*\Sigma A)^\frac{1}{2})|^{-1}$,\newline
$\ell$: estimate on the norm of the smallest eigenvalue of $s(\Sigma A^*\Sigma A)^\frac{1}{2}$,\newline
$\tau>\sqrt{2}$: threshold value for permuted Lagrangian graph bases.

\Ensure $A=WS$ is the generalized polar decomposition with respect to $\Sigma$.
\State $W\leftarrow A/\|A\|_2$.
\For{$k=1,2,\dots$}
\State Compute weighting parameters $a$, $b$, $c$ and update $\ell$  from equations \eqref{Eq:dweights} and \eqref{Eq:dweights2}.
\State Compute entry-bound permuted Lagrangian graph bases of $\colspan{\begin{bmatrix}\Sigma\\\sqrt{c}W\end{bmatrix}}$, i.e.\ 
\begin{align*}
 \begin{bmatrix}\Sigma \\\sqrt{c}W\end{bmatrix} \sim \Pi^{\tran}\begin{bmatrix}I\\\hat{W}\end{bmatrix}=:
        \begin{bmatrix}
         V_1\\V_2
        \end{bmatrix},\quad |\hat{W}_{ij}| < \tau \text{ for } i,j\in\{1,\dots,n\}.
\end{align*}
\State Compute $LDL^{\tran}$ factorization $\Sigma +\hat{W}^* \Sigma \hat{W} = PLDL^*P^{\tran}$.
\State \label{Alg:PermLangGraphGenPolDec:Update} $W \leftarrow  \frac{b}{c} W + (a-\frac{b}{c})V_2PL^{-*}D^{-1}L^{-1}P^{\tran}V_1^*$
\EndFor
\State Compute pseudosymmetric factor and ensure pseudosymmetry numerically
\Statex $S\leftarrow \Sigma W^* \Sigma A, \qquad S\leftarrow (S+\Sigma S^* \Sigma)/2.$
\end{algorithmic}
\end{algorithm}

In the update step (Step \ref{Alg:PermGraphGenPolDec:Update} in Algorithm \ref{Alg:PermGraphGenPolDec} and Step \ref{Alg:PermLangGraphGenPolDec:Update} in Algorithm \ref{Alg:PermLangGraphGenPolDec}), the structure of $V_1$ and $V_2$ should be taken into account for an efficient implementation. 
The rows of the identity matrix are distributed in $V_1$ and $V_2$ according to the permutation $P$ or the symplectic swap $\Pi$. The remaining columns are given by $\hat{W}$. If this is taken care of, the matrix representing the subspace $V=\Pi^ {\tran}\begin{bmatrix}I\\\hat{U}\end{bmatrix}$ never has to actually be formed. We can directly work on the matrices $W$ and $\hat{W}$. 

However, we may need to form a $n\times 2n$ matrix if a good starting guess for the permutation in the computation of the permuted graph basis is desired. For this task,   a heuristic is proposed in \cite{MehP12} that includes a modified version of the QR factorization with column pivoting of an $n\times 2n$ matrix.

\section{Numerical results}\label{Sec:NumRes}
In this paper, we have developed several variants of the $\Sigma$DWH iteration to compute the canonical generalized polar decomposition of a matrix with respect to signature matrices. 

In general, the existence of the decomposition is not guaranteed, which is why we first examine pseudosymmetric matrices with respect to $\Sigma$. For these matrices, the generalized polar decomposition exists if and only if $A$ has no purely imaginary eigenvalues (note that this is also required for $\sign{A}$ to exist). For randomly generated matrices this is typically the case, which is why we observe convergence most times. Pseudosymmetric matrices represent an important class of matrices regarding the application potential of the developed methods, as pointed out in Section \ref{Sec:Introduction}. For other matrices, which are not pseudosymmetric but yield a generalized polar decomposition with respect to $\Sigma$, similar results were observed in further tests. All experiments were performed in MATLAB R2017a.

In light of the asymptotic cubic convergence of the iteration (see \cite[Sec. 4.9.2]{Hig08}) we use the stopping criterion
\begin{align}\label{Eq:StopCrit}
 \| X_{k+1} - X_{k} \|_F \leq (5\epsilon)^{\frac{1}{3}},
\end{align}
 where $\epsilon$ is the machine precision.  

We take the same values for $s$ and $\ell$ as in the QDWH algorithm \cite{NakBG10}, which are given in \eqref{Eq:chooseSL}. As explained there, this makes sense for definite pseudosymmmetric matrices. The resulting convergence behavior is the same as in the standard setting. 
Further investigation of the convergence behavior is needed to devise sensible values for $s$ and $\ell$ in the general case. Here the iteration may act on complex values. This consideration goes beyond the scope of this paper. We use the same values as in the definite case even when they are not completely justified.

We first compare the algorithms in terms of their achieved residual for badly conditioned matrices. We consider square matrices and their generalized polar decomposition for a given signature matrix $\Sigma:=\begin{bmatrix}I_n&\\&-I_n\end{bmatrix}$ ($M=N=\Sigma$ in Definition \ref{Def:GenPolDec}).

\paragraph{Example 1}
A real pseudosymmetric matrix with a condition number $\kappa = 10^k$ is generated as $A:=\Sigma Q D Q^ {\tran}$. $Q$ is a random orthogonal matrix (\texttt{orth(rand(2*n))}), and $D$ is a diagonal matrix containing equally distributed values between $1$ and $10^k$, with alternating signs. A polar decomposition $A\approx WS$ is computed and the resulting residual $\|WS-A\|_{F}/\|A\|_F$ for matrices of size $200\times 200$ ($n=100$) is given in Figure \ref{Fig:SigmaQDWH_Stability}. The residuals were averaged over 10 runs with different randomly generated matrices.\par

\begin{figure}
 \newlength\figureheight
\newlength\figurewidth
\setlength\figureheight{0.4\textwidth}
\setlength\figurewidth{0.8\textwidth}
%
%
\definecolor{mycolor1}{rgb}{0.00000,0.44700,0.74100}%
\definecolor{mycolor2}{rgb}{0.85000,0.32500,0.09800}%
\definecolor{mycolor3}{rgb}{0.92900,0.69400,0.12500}%
\definecolor{mycolor4}{rgb}{0.49400,0.18400,0.55600}%
\definecolor{mycolor5}{rgb}{0.46600,0.67400,0.18800}%
\begin{tikzpicture}

\begin{axis}[%
width=0.951\figurewidth,
height=\figureheight,
at={(0\figurewidth,0\figureheight)},
scale only axis,
xmode=log,
xmin=1,
xmax=1e15,
xminorticks=true,
xlabel style={font=\color{white!15!black}},
xlabel={cond(A)},
ymode=log,
ymin=1e-15,
ymax=1.5,
yminorticks=true,
ylabel style={font=\color{white!15!black}},
ylabel={Residual $\|WS-A\|_{\text{fro}}/\|A\|_{\text{fro}}$},
axis background/.style={fill=white},
axis x line*=bottom,
axis y line*=left,
legend style={at={(0.05,0.5)}, anchor=south west, legend cell align=left, align=left, draw=none, fill=none}
]
\addplot [color=mycolor1, mark=o, mark options={solid, mycolor1}]
  table[row sep=crcr]{%
10	3.47997703442597e-15\\
100	7.12666538738549e-14\\
1000	2.70604152854802e-13\\
10000	1.25641888820759e-11\\
100000	5.55953624107927e-10\\
1000000	1.89964687476253e-08\\
10000000	7.30473353221498e-07\\
100000000	1.83074324402766e-05\\
1000000000	0.000365172506640528\\
10000000000	0.0258796395265902\\
100000000000	0.0259483030171659\\
1000000000000	0.0387020903967116\\
10000000000000	0.035529083361234\\
100000000000000	0.0584105528197845\\
1e+15	0.0571939714389267\\
};
\addlegendentry{Backslash}

\addplot [color=mycolor2, mark=+, mark options={solid, mycolor2},densely dotted]
  table[row sep=crcr]{%
10	8.20509737453684e-16\\
100	1.39904032234934e-14\\
1000	4.28990923696599e-15\\
10000	3.7062101774673e-14\\
100000	3.68026563455984e-13\\
1000000	3.3804761480605e-12\\
10000000	1.9624161592583e-11\\
100000000	2.56943372866637e-10\\
1000000000	1.15707006021766e-09\\
10000000000	7.71258491841916e-09\\
100000000000	3.43323172486706e-07\\
1000000000000	2.79198640741701e-09\\
10000000000000	2.04880929342889e-09\\
100000000000000	2.20723307204552e-10\\
1e+15	3.31549190139513e-11\\
};
\addlegendentry{LDL}

\addplot [color=mycolor3, mark=asterisk, mark options={solid, mycolor3}, dashed]
  table[row sep=crcr]{%
10	2.6081698490087e-12\\
100	1.63988288162473e-10\\
1000	1.35455122630068e-11\\
10000	3.94483181526401e-11\\
100000	5.07030651486565e-11\\
1000000	3.03061175579705e-11\\
10000000	3.46188675364959e-11\\
100000000	2.78762900817632e-11\\
1000000000	5.47965835450489e-11\\
10000000000	1.31427036573332e-10\\
100000000000	3.54281886337334e-10\\
1000000000000	6.29026201995437e-10\\
10000000000000	6.60818120140768e-09\\
100000000000000	4.96811599875952e-09\\
1e+15	4.82113258308929e-07\\
};
\addlegendentry{Hyperbolic QR}

\addplot [color=mycolor4, mark=star, mark options={solid, mycolor4},densely dashed]
  table[row sep=crcr]{%
10	9.97107425437097e-15\\
100	4.00848535387901e-14\\
1000	6.28055207797586e-14\\
10000	6.80029670260703e-14\\
100000	4.89747902816565e-14\\
1000000	9.18069889100666e-14\\
10000000	3.21194892314017e-14\\
100000000	5.22812663694596e-14\\
1000000000	7.20690583996626e-14\\
10000000000	4.49960924599387e-14\\
100000000000	7.33799287800149e-14\\
1000000000000	1.73962397203231e-13\\
10000000000000	6.18174095620987e-14\\
100000000000000	2.69118874065461e-14\\
1e+15	5.50776754788221e-14\\
};
\addlegendentry{LDLIQR2}

\addplot [color=mycolor5, mark=square, mark options={solid, mycolor5},densely dashdotdotted]
  table[row sep=crcr]{%
10	6.9207046686701e-16\\
100	2.83272519495585e-15\\
1000	1.75868239337135e-15\\
10000	5.35846356307863e-15\\
100000	2.47124580858594e-15\\
1000000	2.51581539164198e-15\\
10000000	2.79995515174389e-15\\
100000000	3.66917118806044e-15\\
1000000000	3.84391760430639e-15\\
10000000000	2.16970552625374e-15\\
100000000000	2.17243208753813e-15\\
1000000000000	1.72904072300165e-15\\
10000000000000	1.78992275162387e-15\\
100000000000000	3.02308047016748e-15\\
1e+15	3.45430089748014e-15\\
};
\addlegendentry{PLG}

\end{axis}
\end{tikzpicture}%
 \caption{ Residuals for different iterations for computing the generalized polar decomposition of pseudosymmetric matrices $A\in\mathbb{R}^{200\times 200}$ with a certain condition number. ``Backslash'' refers to the naive implementation, ``LDL'' refers to iteration \eqref{Eq:LDLDWH}, ``Hyperbolic QR'' and ``LDLIQR2'' refer to the variants of iteration \ref{Eq:HQDWH}. ``PLG'' refers to the variant using permuted Lagrangian graph bases described in Algorithm \ref{Alg:PermLangGraphGenPolDec}. \label{Fig:SigmaQDWH_Stability}}
\end{figure}
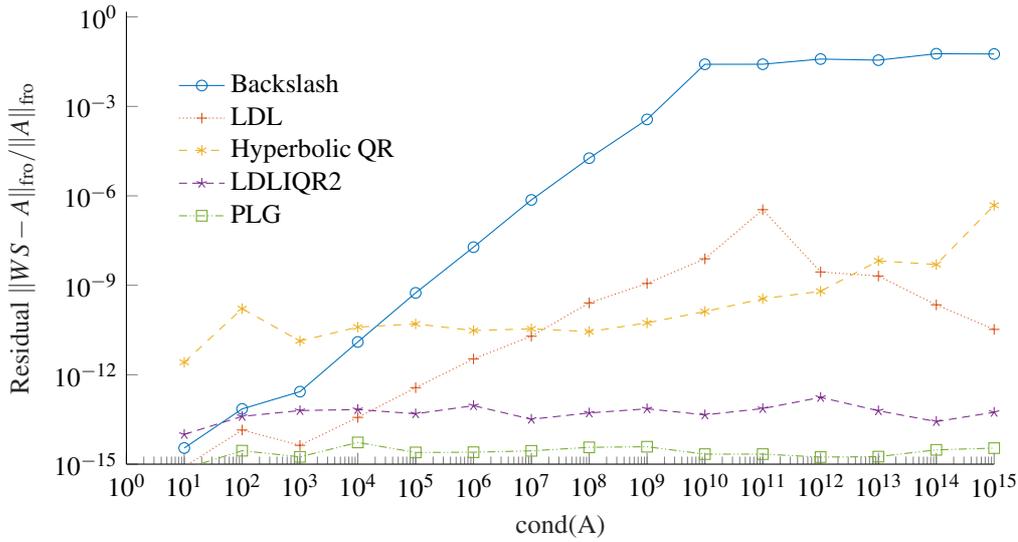 

We see that a naive implementation of the $\Sigma DWH$ iteration \eqref{Eq:SigmaPolIter} leads to a highly unstable method. The accuracy improves as the iteration is rewritten to employ the $LDL^{\tran}$ decomposition (see \eqref{Eq:LDLDWH}). This can be interpreted as exploiting structure that is hidden and ignored in the original formulation. Again the accuracy deteriorates as the matrix becomes ill-conditioned. Surprisingly, for matrices with a condition number higher than $10^{11}$, this trend is reversed and the method performs quite well for extremely ill-conditioned matrices. A possible explanation is that MATLAB function \texttt{ldl} estimates the condition number of the input and acts differently, in our case preferably, for ill-conditioned matrices.  
The $LDL^{\tran}$-based iteration can be read as an iteration based on the indefinite QR decomposition (see Theorem \ref{Thm:IQR} and iteration \eqref{Eq:HQDWH}), that has been computed via the pivoted $LDL^ {\tran}$ decomposition. For computing a hyperbolic QR decomposition directly, using a column elimination approach, we used available MATLAB code \cite{Hou15}, based on the works \cite{And00,ChaS96, HenH03}. In our setting, this does not perform well. For well-conditioned matrices, this approach delivers the worst accuracy. For ill-conditioned matrices it yields better results than the naive implementation, but is still highly dependent on the condition number. The two remaining methods use the indefinite QR decomposition via a double $LDL^{\tran}$ decomposition (LDLIQR2) and permuted Lagrangian graph bases (PLG). These give high accuracy, which is independent of the condition number. For well-conditioned matrices, $LDLIQR2$ does not seem to be preferable, as it yields a higher residual than even the naive implementation. However, the residual stays at a consistently low order of magnitude as the condition number increases. Using PLGs consistently delivers the best results regarding accuracy, in the well-conditioned as well as in the ill-conditioned setting.  

The disadvantage of the PLG approach is that it relies on very recently developed, fine-grained algorithms. Therefore, no optimized implementations are available yet and the runtimes resulting from a prototype MATLAB implementation are very high. Formulating the computation of PLGs in a way that exploits current computer architectures is a challenge not yet addressed. This method would need to be block-based in order to exploit the memory hierarchy, be parallelizable and avoid communication. The LDLIQR2 approach on the other side is easily implemented and only relies on the $LDL^ {\tran}$ factorization for which highly optimized implementations are available. However, both approaches rely on pivoting strategies, implying a considerable cost for communication if they are to be deployed in a massively parallel setup.

In a practical implementation, a combination of the $LDL^{\tran}$, LDLIQR2 and PLG approach should be considered, as it is possible for each iteration step to be performed by a different method. For badly conditioned matrices, the first steps could be performed via PLG. As soon as the condition number of the iterate has improved, another method could be employed, which shows better performance.

We now compare the developed algorithms with other available methods, in particular concerning convergence properties. A standard approach for computing (generalized) polar decompositions is the scaled Newton iteration (see e.g. \cite{Hig08}). For a given signature matrix $\Sigma$, it is given as
\begin{align}\label{Eq:ScaledNewton}
 X_{k+1} = \frac{1}{2}(\mu_k X_k + \mu_k^{-1}\Sigma X_k^{-*} \Sigma),\qquad X_0 = A.
\end{align}
It is called the Newton iteration as it represents the Newton method for solving $A^\star A= I$. See also \cite{Hig03} for details. For the DWH iteration, we have shown in Corollary \ref{Co:IterationOnS} that the iteration acts as a matrix sign function iteration on the self-adjoint factor of the decomposition. This observation also holds for the Newton iteration. Let $X_k=WS_k$ be a generalized polar decomposition of the iterate, then \eqref{Eq:ScaledNewton} is equivalent to
\begin{align*}
 X_{k+1} = W \left(\frac{1}{2} (\mu_k S_k + \mu_k^{-1}S_k^{-1})\right),\qquad X_0 = A.
\end{align*}
The part in large parentheses is the Newton iteration for the matrix sign function acting on $S_k$. In the standard setting, the self-adjoint factor is Hermitian and its eigenvalues are real. This is exploited to devise scaled iterations which drive these values closer to one and therefore accelerate convergence (see \cite{Hig08,ByeX08, NakBG10}). For the generalized polar decomposition, the values are not necessarily real. In this case, we can fall back on scaling strategies for the matrix sign function which show good convergence properties in practice. In particular, we consider determinantal scaling \cite{Bye87}, where
\begin{align*}
 \mu_k := |\det{S_k}|^{-\frac{1}{n}} = |\det{X_k}|^{-\frac{1}{n}}.
\end{align*}
The computation via the iterate $X_k$ becomes possible because signature matrices and automorphisms with respect to them have a determinant of $\pm1$. Its computation is cheap as it can be computed from the diagonal values of the LU factorization, which is used to compute $X_k^{-*}$.  For the next numerical example, we generate matrices for which the generalized polar decomposition with respect to $\Sigma$ is guaranteed to exist, but where the eigenvalues of the self-adjoint factor are all complex. 

\paragraph{Example 2}
 For the generalized polar decomposition $A=WS$, we prescribe the self-adjoint factor $S$ with a condition number $\kappa=10^k$.  The absolute values $r_j$ of the eigenvalues $\lambda_j=r_j\exp{(\iu \phi_j)}$ of $H$  are uniformly distributed between $10^{-\floor{k/2}}$ and $10^{\ceil{k/2}}$. $\phi_j$ is uniformly distributed between $-\pi/2$ and $\pi/2$, i.e.\ all eigenvalues lie in the right half plane. $S$ is generated using two random orthogonal matrices $Q_1,Q_2\in\mathbb{R}^{n\times n}$, $Q=\begin{bmatrix}Q_1&0\\0&Q_2\end{bmatrix}$, 
 \begin{align*}
S:=Q^ {\tran}
 \begin{bmatrix}
  \Real{\lambda_1}&&&-\Imag{\lambda_1}&&\\
  &\ddots&&&\ddots&\\
  &&\Real{\lambda_n}&&&-\Imag{\lambda_n}\\
    \Imag{\lambda_1}&&&\Real{\lambda_1}&&\\
  &\ddots&&&\ddots&\\
  &&\Imag{\lambda_n}&&&\Real{\lambda_n}\\  
 \end{bmatrix}Q.
 \end{align*}

 The polar factor $W$ is prescribed as 
\begin{align*}
W:=\begin{bmatrix}
    Q_3&\\&Q_4
   \end{bmatrix}\begin{bmatrix}C_W & S_W \\ S_W & C_W\end{bmatrix}.
\end{align*} 
   $Q_3$ and $Q_4$ are random orthogonal matrices. The matrix $\begin{bmatrix}C_W & S_W \\ S_W & C_W\end{bmatrix}$ describes a series of hyperbolic Givens rotations, i.e.\ 
   \begin{align*}
   C_W=\diag{\cosh{\omega_1}, \dots, \cosh{\omega_{2n}}},\quad S_W=\text{diag}(\sinh{\omega_1},\dots, \sinh{\omega_{2n}}),
   \end{align*}
   where $\omega_1,\dots,\omega_{2n}$ are uniformly distributed angles between $0$ and $\frac{1}{4}\pi$. Averaged results for 20 matrices of size $200\times 200$ ($n=100$) are given in Table \ref{Tab:DWHvsNewton}.
   \par

   For the Newton iteration we use the stopping criterion given in \cite{Hig08}, Chapter 8:
   \begin{align}
    \|X_{k++1}-X_{k}\|_F\leqq (2\epsilon)^{\frac{1}{2}},
   \end{align}
   where $\epsilon$ denotes the machine precision.

For the $\Sigma DWH$ iteration, we employ permuted graph bases (Algorithm \ref{Alg:PermGraphGenPolDec}), available in the \texttt{pgdoubling} package associated with \cite{MehP12}. It is compared to the Newton iteration with determinantal scaling (DN) and the Newton iteration with sub-optimal scaling \cite{ByeX08} (SON). We generate 20 different random matrices and report the average number of iterations and the resulting residual $\|A-\tilde{W}\tilde{S}\|_{F}/\|A\|_{F}$, where $\tilde{W}$ and $\tilde{S}$ are the computed polar factors. We influence the condition number of $A$ indirectly via $\kappa=\cond{S}$. It is about twice as high as $\kappa$ because of the used hyperbolic Givens rotations.

\begin{table}[t]
\centering
\caption{Convergence behavior for different methods computing the generalized polar decomposition with respect to $\Sigma$ of a $200\times 200$ matrix (Example 2).\label{Tab:DWHvsNewton}}
\begin{tabular}{lllllll}
\toprule
&$\kappa$&$10$&$10^{5}$&$10^{10}$&$10^{15}$\\
&\texttt{cond}(A)
&2.15e+01&1.98e+05&1.98e+10&2.02e+15\\

\midrule
\multirow{3}{*}{\# iterations}
&$\Sigma$DWH   &8.70&9.70&10.65&10.60   \\
&DN	   	       &12.30&20.00&32.95&44.13 \footnotemark[1]\\
&SON	         &14.05&15.45&16.45&16.74 \footnotemark[2]\\ \midrule
\multirow{3}{*}{residual}
&$\Sigma$DWH			&5.06e-15&7.68e-15&9.88e-15&3.00e-15\\ 
&DN	 			        &2.98e-15&2.98e-15&2.93e-15&2.96e-15\\      
&SON	   	      	&3.00e-15&2.98e-15&2.96e-15&2.89e-15\\ \midrule
\multirow{3}{*}{rel. error $W$}
&$\Sigma$DWH			&1.35e-14&9.45e-12&5.35e-08&8.01e-03\\ 
&DN	 		        	&1.18e-14&3.96e-11&1.53e-06&7.83e-02\\      
&SON	   		     	&1.32e-14&3.12e-11&2.24e-07&5.22e-03\\   \midrule
\multirow{3}{*}{rel. error $S$}
&$\Sigma$DWH			&1.05e-14&2.76e-14&3.51e-14&4.51e-14\\ 
&DN	 		        	&9.98e-15&9.00e-12&8.52e-07&6.65e-02\\      
&SON	          	&1.43e-14&2.42e-14&2.33e-14&2.83e-14\\   \midrule
\multirow{3}{*}{$\|\Sigma W^{\tran}\Sigma W - I\|_{F}$}
&$\Sigma$DWH			&1.16e-15&1.23e-15&1.07e-15&1.25e-15\\ 
&DN	 			        &3.19e-15&3.19e-15&3.13e-15&3.12e-15\\      
&SON	   		     	&3.21e-15&3.18e-15&3.16e-15&3.09e-15\\  
\bottomrule

\end{tabular}
\footnotesize
\vspace{1ex}
       \\\footnotemark[1] 5 out of 20 runs did not converge. $\qquad$ \footnotemark[2] 1 out of 20 runs did not converge.
\end{table}

In the standard setting, DWH and SON converge in 6 \cite{NakBG10}, respectively 9 \cite{ByeX08}, steps. Here, the iterations act as scalar iterations on the eigenvalues of the self-adjoint factor, who happen to be real in the standard case, but not in the indefinite setting. Still we can observe that they converge significantly faster than the Newton iteration with determinantal scaling, in particular for ill-conditioned matrices.  $\Sigma$DWH generally seems to need about 2/3 as many iteration steps as SON. Whether the cost per iteration is comparable, depends on the chosen implementation method for the DWH iteration. The simplest method is based on one $LDL^{\tran}$ decomposition \eqref{Eq:LDLDWH} and the main cost is a symmetric matrix inversion, just as in the Newton variants. If higher stability is needed in the case of badly conditioned matrices, it can be obtained at the expense of a higher costs per iteration. This can be done by employing the LDLIQR2 iteration or by improving the corresponding subspace via Lagrangian graph bases (Algorithm \ref{Alg:PermGraphGenPolDec}).  

$\Sigma$DWH displays the lowest backward error for the $\Sigma$-orthogonal factor W, which deteriorates for all methods as matrices become ill-conditioned. All methods yield a factor $W$ that shows a good $\Sigma$-orthogonality. SON and $\Sigma$DWH both do a much better job than DN at recovering the self-adjoint factor $S$ with backward errors of order $10^{-14}$ instead of $10^{-2}$.  DN and SON sometimes fail to converge for badly conditioned matrices. 


We see that $\Sigma$DWH can compete with standard methods, even if no definite pseudosymmetric structure is given. Note that $\Sigma$DWH is the only one of the three methods that can directly be applied to non-square matrices, in order to compute the canonical generalized polar decomposition. 

The results of Example 2 should be seen as preliminary, as the scaling factors and the stopping criterion \eqref{Eq:StopCrit} are not completely justified in the non-definite case. They do, however, motivate further research to devise iterations based on rational functions acting on complex values.

\paragraph{Example 3}
We generate pseudosymmetric matrices as in Example 1, but additionally ensure the definiteness of $\Sigma A$ by choosing only positive values for $D$. We compare the same methods as in Example 2 with respect to convergence properties. 20 matrices were generated and averaged results are reported in Table \ref{Tab:DWHvsNewtonDefinite}.\par

\begin{table}[t]
\centering
       \caption{Convergence behavior for different methods computing the generalized polar decomposition with respect to $\Sigma$ of definite pseudosymmetric matrices of size $200\times 200$ (Example 3).\label{Tab:DWHvsNewtonDefinite}}
\begin{tabular}{lllllll}
\toprule
&$\kappa$&$10$&$10^{5}$&$10^{10}$&$10^{15}$\\
\midrule
\multirow{3}{*}{\# iterations}
&$\Sigma$DWH&4.00&5.00&6.00&6.00\\
&DN	   			&6.00&15.10&30.50&44.50\\
&SON	    	&6.00&7.00&8.00&9.00\\				\midrule
\multirow{3}{*}{residual}
&$\Sigma$DWH   &1.38e-15&4.47e-14&2.34e-14&2.85e-14\\ 
&DN	 		       &8.11e-16&2.46e-14&5.30e-14&1.05e-14\\ 
&SON	         &8.14e-16&3.20e-14&3.03e-14&1.04e-14\\ \midrule
\multirow{3}{*}{$\|\Sigma W^{\tran}\Sigma W - I\|_{F}$}
&$\Sigma$DWH			&1.26e-15&1.95e-13&2.03e-13&6.92e-14\\ 
&DN	 		        	&7.31e-16&6.87e-14&5.66e-14&3.13e-14\\      
&SON	   		     	&7.16e-16&6.94e-14&5.64e-14&3.09e-14\\  
\bottomrule
\end{tabular}
\end{table}

As expected, we see the convergence of $\Sigma$DWH and of the Newton iteration with suboptimal scaling within 6, respectively 9, iterations. 

\section{Conclusions}\label{Sec:Conclusions}
In this paper, we have presented a generalization of the QDWH method to compute the canonical generalized polar decomposition of a matrix with respect to a signature matrix $\Sigma$. If $\Sigma$ is chosen as the identity, the hyperbolic QR decomposition becomes the standard QR decomposition and can safely be computed with the column elimination approach. This yields the well-known QDWH iteration. 

Several options were provided on how to realize the iterations. While the column elimination based hyperbolic QR decomposition forms the most natural generalization of QDWH, it does not yield the best results regarding stability. LDLIQR2 (Section \ref{Sec:LDLHQR2}) or employing permuted (Lagrangian) graph bases (Algorithm \ref{Alg:PermGraphGenPolDec} and \ref{Alg:PermLangGraphGenPolDec}) perform better in this regard.

Using these variants, a stability similar to Newton methods can be observed, but fewer iterations are needed. For the important class of definite pseudosymmetric matrices, the convergence behavior corresponds to the standard QDWH method. Convergence up to machine precision can be guaranteed in 6 steps for reasonably conditioned matrices.

The theoretical results we gave, in particular Lemma \ref{Lem:GenQRIter}, provide a greater flexibility in the algorithmic design for DWH-based iterations, which might be utilized further than the scope of this paper permits. Other methods for computing well-conditioned bases could also yield good results. Being more flexible in algorithmic design becomes increasingly important in view of modern computer architectures. In general these become more heterogeneous. They employ different levels of parallelism on various scales and have restrictions on available memory or use numerous accelerators and GPUs. Our framework provides the flexibility to find solutions, which could exploit the architecture at hand to its full potential. 

Our main motivation came from computing the matrix sign function of large definite pseudosymmetric matrices. Here, the iteration acts as a rational function on what can be understood as generalized singular values. Hence, further developments using ideas from \cite{NakF16} are possible. Using Zolotarev's functions as best-approximations to the sign function of higher degree, yields an iteration that converges in two steps. The individual steps take more work but are embarrassingly parallel and well-suited for large-scale high performance computations. In the field of computational quantum physics this is exactly what is needed making this research direction promising. 

Computing the hyperbolic QR decomposition is useful in many applications, which could benefit from the analysis given in Section \ref{Sec:LDLHQR}. In particular the LDLIQR2 method (Algorithm \ref{Alg:LDLIQR2}) is a promising technique to tackle problems associated with the stability of the hyperbolic or indefinte QR decomposition.
\addcontentsline{toc}{section}{References}


\begin{thebibliography}{10}

\bibitem{And00}
E.~Anderson.
\newblock Discontinuous plane rotations and the symmetric eigenvalue problem.
\newblock {LAPACK} Working Note 150, 2000.
\newblock URL: \url{http://www.netlib.org/lapack/lawnspdf/lawn150.pdf}.

\bibitem{AshGL99}
C.~Ashcraft, R.~G. Grimes, and J.~G. Lewis.
\newblock Accurate symmetric indefinite linear equation solvers.
\newblock {\em SIAM J. Matrix Anal. Appl.}, 20(2):513--561, 1999.
\newblock \href {http://dx.doi.org/10.1137/S0895479896296921}
  {\path{doi:10.1137/S0895479896296921}}.

\bibitem{BaiD93}
Zhaojun Bai and James~W. Demmel.
\newblock Design of a parallel nonsymmetric eigenroutine toolbox, part i.
\newblock Technical Report UCB/CSD-92-718, EECS Department, University of
  California, Berkeley, Feb 1993.
\newblock URL:
  \url{http://www2.eecs.berkeley.edu/Pubs/TechRpts/1993/6014.html}.

\bibitem{Ben01}
P.~Benner.
\newblock Symplectic balancing of {H}amiltonian matrices.
\newblock {\em {SIAM} J. Sci. Comput.}, 22(5):1885--1904, 2001.
\newblock \href {http://dx.doi.org/10.1137/S1064827500367993}
  {\path{doi:10.1137/S1064827500367993}}.

\bibitem{BenKK16}
P.~Benner, V.~Khoromskaia, and B:~N. Khoromskij.
\newblock A reduced basis approach for calculation of the {Bethe-Salpeter}
  excitation energies using low-rank tensor factorizations.
\newblock {\em Mol. Phys.}, 114(7--8):1148--1161, 2016.
\newblock \href {http://dx.doi.org/10.1080/00268976.2016.1149241}
  {\path{doi:10.1080/00268976.2016.1149241}}.

\bibitem{BenP20c}
P.~Benner and C.~Penke.
\newblock Efficient and accurate algorithms for solving the {B}ethe-{S}alpeter
  eigenvalue problem for crystalline systems, 2020.
\newblock \href {http://arxiv.org/abs/1011.3077} {\path{arXiv:1011.3077}}.

\bibitem{BenP20a}
P.~Benner and C.~Penke.
\newblock {GR} decompositions and their relations to {C}holesky-like
  factorizations.
\newblock {\em Proc. Appl. Math. Mech.}, 20(1):e202000065, 2021.
\newblock \href {http://dx.doi.org/10.1002/pamm.202000065}
  {\path{doi:10.1002/pamm.202000065}}.

\bibitem{BenQ99}
P.~Benner and E.~S. Quintana-Ort{\'i}.
\newblock Solving stable generalized {L}yapunov equations with the matrix sign
  function.
\newblock {\em Numer. Algorithms}, 20(1):75--100, 1999.
\newblock \href {http://dx.doi.org/10.1023/A:1019191431273}
  {\path{doi:10.1023/A:1019191431273}}.

\bibitem{BlaDJetal20}
X.~Blase, I.~Duchemin, D.~Jacquemin, and P.-F. Loos.
\newblock The {Bethe-Salpeter} equation formalism: From physics to chemistry.
\newblock {\em J. Phys. Chem. Lett.}, 11(17):7371--7382, 2020.
\newblock \href {http://dx.doi.org/10.1021/acs.jpclett.0c01875}
  {\path{doi:10.1021/acs.jpclett.0c01875}}.

\bibitem{BojHP03}
A.~Bojanczyk, N.~J. Higham, and H.~Patel.
\newblock Solving the indefinite least squares problem by hyperbolic {QR}
  factorization.
\newblock {\em SIAM J. Matrix Anal. Appl.}, 24(4):914--931, 2003.
\newblock \href {http://dx.doi.org/10.1137/S0895479802401497}
  {\path{doi:10.1137/S0895479802401497}}.

\bibitem{BolR95}
Y.~Bolshakov and B.~Reichstein.
\newblock Unitary equivalence in an indefinite scalar product: an analogue of
  singular-value decomposition.
\newblock {\em Linear Algebra Appl.}, 222:155--226, 1995.
\newblock \href {http://dx.doi.org/10.1016/0024-3795(93)00295-B}
  {\path{doi:10.1016/0024-3795(93)00295-B}}.

\bibitem{BolVRetal97}
Y.~Bolshakov, C.~V.~M. van~der Mee, A.~C.~M. Ran, B.~Reichstein, and L.~Rodman.
\newblock Polar decompositions in finite-dimensional indefinite scalar product
  spaces: general theory.
\newblock {\em Linear Algebra Appl.}, 261:91--141, 1997.
\newblock \href {http://dx.doi.org/10.1016/S0024-3795(96)00317-5}
  {\path{doi:10.1016/S0024-3795(96)00317-5}}.

\bibitem{BoGn05}
I.~Borg and P.~J.~F. Groenen.
\newblock {\em {Modern Multidimensional Scaling: Theory and Applications}}.
\newblock Springer-Verlag, Berlin Heidelberg, 2005.
\newblock \href {http://dx.doi.org/10.1007/0-387-28981-X}
  {\path{doi:10.1007/0-387-28981-X}}.

\bibitem{BunK77}
J.~R. Bunch and L.~Kaufman.
\newblock Some stable methods for calculating inertia and solving symmetric
  linear systems.
\newblock {\em Math. Comp.}, 31(137):163--179, 1977.
\newblock \href {http://dx.doi.org/10.1090/S0025-5718-1977-0428694-0}
  {\path{doi:10.1090/S0025-5718-1977-0428694-0}}.

\bibitem{BunKP76}
J.~R. Bunch, L.~Kaufman, and B.~Parlett.
\newblock Decomposition of a symmetric matrix.
\newblock {\em Numer. Math.}, 27:95--109, 1976.
\newblock \href {http://dx.doi.org/10.1007/BF01399088}
  {\path{doi:10.1007/BF01399088}}.

\bibitem{BunP71}
J.~R. Bunch and B.~N. Parlett.
\newblock Direct methods for solving symmetric indefinite systems of linear
  equations.
\newblock {\em {SIAM} J. Numer. Anal.}, 8:639--655, 1971.
\newblock \href {http://dx.doi.org/10.1137/0708060}
  {\path{doi:10.1137/0708060}}.

\bibitem{BunB85}
W.~Bunse and A.~Bunse-Gerstner.
\newblock {\em Numerische Lineare Algebra}.
\newblock Teubner, Stuttgart, 1985.

\bibitem{Bye87}
R.~Byers.
\newblock Solving the algebraic {R}iccati equation with the matrix sign
  function.
\newblock {\em Linear Algebra Appl.}, 85:267--279, 1987.

\bibitem{ByeX08}
R.~Byers and H.~Xu.
\newblock A new scaling for {N}ewton's iteration for the polar decomposition
  and its backward stability.
\newblock {\em SIAM J. Matrix Anal. Appl.}, 30(2):822--843, 2008.
\newblock \href {http://dx.doi.org/10.1137/070699895}
  {\path{doi:10.1137/070699895}}.

\bibitem{Cas95}
M.~Casida.
\newblock Time-dependent density functional response theory for molecules.
\newblock In {\em Recent Advances in Density Functional Methods}, pages
  155--192. World Scientific, 1995.
\newblock \href {http://dx.doi.org/10.1142/9789812830586_0005}
  {\path{doi:10.1142/9789812830586_0005}}.

\bibitem{ChaS96}
S.~Chandrasekaran and A.~H. Sayed.
\newblock Stabilizing the generalized {S}chur algorithm.
\newblock {\em SIAM J. Matrix Anal. Appl.}, 17(4):950--983, 1996.
\newblock \href {http://dx.doi.org/10.1137/S0895479895287419}
  {\path{doi:10.1137/S0895479895287419}}.

\bibitem{DonGKetal84}
J.~J. Dongarra, J.~R. Gabriel, D.~D. Koelling, and J.~H. Wilkinson.
\newblock The eigenvalue problem for {H}ermitian matrices with time reversal
  symmetry.
\newblock {\em Linear Algebra Appl.}, 60:27--42, 1984.
\newblock \href {http://dx.doi.org/10.1016/0024-3795(84)90068-5}
  {\path{doi:10.1016/0024-3795(84)90068-5}}.

\bibitem{Duf04}
I.~S. Duff.
\newblock M{A}57---a code for the solution of sparse symmetric definite and
  indefinite systems.
\newblock {\em {ACM} Trans. Math. Software}, 30(2):118--144, 2004.
\newblock \href {http://dx.doi.org/10.1145/992200.992202}
  {\path{doi:10.1145/992200.992202}}.

\bibitem{GolV13}
G.~H. Golub and C.~F. {Van~Loan}.
\newblock {\em Matrix Computations}.
\newblock Johns Hopkins Studies in the Mathematical Sciences. Johns Hopkins
  University Press, Baltimore, fourth edition, 2013.

\bibitem{HenH03}
D.~{Henrion} and P.~{Hippe}.
\newblock Hyperbolic {QR} factorization for {J}-spectral factorization of
  polynomial matrices.
\newblock In {\em 42nd IEEE International Conference on Decision and Control},
  volume~4, pages 3479.--3484, 2003.
\newblock \href {http://dx.doi.org/10.1109/CDC.2003.1271685}
  {\path{doi:10.1109/CDC.2003.1271685}}.

\bibitem{Hig03}
N.~J. Higham.
\newblock {$J$}-orthogonal matrices: properties and generation.
\newblock {\em {SIAM} Rev.}, 45(3):504--519, 2003.
\newblock \href {http://dx.doi.org/10.1137/S0036144502414930}
  {\path{doi:10.1137/S0036144502414930}}.

\bibitem{Hig08}
N.~J. Higham.
\newblock {\em {F}unctions of {M}atrices: {T}heory and {C}omputation}.
\newblock Applied Mathematics. {SIAM}, Philadelphia, PA, 2008.
\newblock \href {http://dx.doi.org/10.1137/1.9780898717778}
  {\path{doi:10.1137/1.9780898717778}}.

\bibitem{HigMMetal05}
N.~J. Higham, D.~Mackey, N.~Mackey, and F.~Tisseur.
\newblock Functions preserving matrix groups and iterations for the matrix
  square root.
\newblock {\em SIAM J. Matrix Anal. Appl.}, 26(3):849--877, 2005.
\newblock \href {http://dx.doi.org/10.1137/S0895479804442218}
  {\path{doi:10.1137/S0895479804442218}}.

\bibitem{HigMT10}
N.~J. Higham, C.~Mehl, and F.~Tisseur.
\newblock The canonical generalized polar decomposition.
\newblock {\em SIAM J. Matrix Anal. Appl.}, 31(4):2163--2180, 2010.
\newblock \href {http://dx.doi.org/10.1137/090765018}
  {\path{doi:10.1137/090765018}}.

\bibitem{Hou15}
I.~Houtzager.
\newblock {JQR/JRQ/JQL/JLQ factorizations}.
\newblock {\em MATLAB Central File Exchange}, 2015.
\newblock Retrieved February 12, 2020.
\newblock URL: \url{https://www.mathworks.com/matlabcentral/fileexchange/50329-
  jqr-jrq-jql-jlq-factorizations}.

\bibitem{KenL91b}
C.~Kenney and A.~J. Laub.
\newblock Rational iterative methods for the matrix sign function.
\newblock {\em SIAM J. Matrix Anal. Appl.}, 12:273--291, 1991.
\newblock \href {http://dx.doi.org/10.1137/0612020}
  {\path{doi:10.1137/0612020}}.

\bibitem{KenL92}
C.~Kenney and A.~J. Laub.
\newblock On scaling {N}ewton's method for polar decomposition and the matrix
  sign function.
\newblock {\em SIAM J. Matrix Anal. Appl.}, 13:688--706, 1992.
\newblock \href {http://dx.doi.org/10.1137/0613044}
  {\path{doi:10.1137/0613044}}.

\bibitem{Kin05}
U.~Kintzel.
\newblock Procrustes problems in finite dimensional indefinite scalar product
  spaces.
\newblock {\em Linear Algebra Appl.}, 402:1--28, 2005.
\newblock \href {http://dx.doi.org/10.1016/j.laa.2005.01.004}
  {\path{doi:10.1016/j.laa.2005.01.004}}.

\bibitem{LiYS12}
H.~Li, H.~Yang, and H.~Shao.
\newblock Perturbation analysis for the hyperbolic {QR} factorization.
\newblock {\em Comput. Math. Appl.}, 63(12):1607--1620, 2012.
\newblock \href {http://dx.doi.org/10.1016/j.camwa.2012.03.036}
  {\path{doi:10.1016/j.camwa.2012.03.036}}.

\bibitem{LtaSEetal19}
H.~Ltaief, D.~Sukkari, A.~Esposito, Y.~Nakatsukasa, and D.~Keyes.
\newblock Massively parallel polar decomposition on distributed-memory systems.
\newblock {\em {ACM} Trans. Parallel Comput.}, 6(1), 2019.
\newblock \href {http://dx.doi.org/10.1145/3328723}
  {\path{doi:10.1145/3328723}}.

\bibitem{MacMT05}
D.~S. Mackey, N.~Mackey, and F.~Tisseur.
\newblock Structured factorizations in scalar product spaces.
\newblock {\em SIAM J. Matrix Anal. Appl.}, 27(3):821--850, 2005.
\newblock \href {http://dx.doi.org/10.1137/040619363}
  {\path{doi:10.1137/040619363}}.

\bibitem{MehMX04}
C.~Mehl, V.~Mehrmann, and H.~Xu.
\newblock On doubly structured matrices and pencils that arise in linear
  response theory.
\newblock {\em Linear Algebra Appl.}, 380:3--51, 2004.
\newblock \href {http://dx.doi.org/10.1016/S0024-3795(02)00455-X}
  {\path{doi:10.1016/S0024-3795(02)00455-X}}.

\bibitem{MehRR06}
C.~Mehl, A.~C.~M. Ran, and L.~Rodman.
\newblock Polar decompositions of normal operators in indefinite inner product
  spaces.
\newblock In {\em Operator theory in {K}rein spaces and nonlinear eigenvalue
  problems}, volume 162 of {\em Oper. Theory Adv. Appl.}, pages 277--292.
  Birkh\"{a}user, Basel, 2006.
\newblock \href {http://dx.doi.org/10.1007/3-7643-7453-5_15}
  {\path{doi:10.1007/3-7643-7453-5_15}}.

\bibitem{MehP12}
V.~Mehrmann and F.~Poloni.
\newblock Doubling algorithms with permuted {L}agrangian graph bases.
\newblock {\em SIAM J. Matrix Anal. Appl.}, 33(3):780--805, 2012.
\newblock \href {http://dx.doi.org/10.1137/110850773}
  {\path{doi:10.1137/110850773}}.

\bibitem{NakBG10}
Y.~Nakatsukasa, Z.~Bai, and F.~Gygi.
\newblock Optimizing {H}alley's iteration for computing the matrix polar
  decomposition.
\newblock {\em SIAM J. Matrix Anal. Appl.}, 31(5):2700--2720, 2010.
\newblock \href {http://dx.doi.org/10.1137/090774999}
  {\path{doi:10.1137/090774999}}.

\bibitem{NakF16}
Y.~Nakatsukasa and R.~W. Freund.
\newblock Computing fundamental matrix decompositions accurately via the matrix
  sign function in two iterations: the power of {Z}olotarev's functions.
\newblock {\em {SIAM} Rev.}, 58(3):461--493, 2016.
\newblock \href {http://dx.doi.org/10.1137/140990334}
  {\path{doi:10.1137/140990334}}.

\bibitem{NakH12}
Y.~Nakatsukasa and N.~J. Higham.
\newblock Backward stability of iterations for computing the polar
  decomposition.
\newblock {\em SIAM J. Matrix Anal. Appl.}, 33(2):460--479, 2012.
\newblock \href {http://dx.doi.org/10.1137/110857544}
  {\path{doi:10.1137/110857544}}.

\bibitem{OniRR02}
G.~Onida, L.~Reining, and A.~Rubio.
\newblock Electronic excitations: density-functional versus many-body
  {G}reen's-function approaches.
\newblock {\em Rev. Mod. Phys.}, 74:601--659, Jun 2002.
\newblock \href {http://dx.doi.org/10.1103/RevModPhys.74.601}
  {\path{doi:10.1103/RevModPhys.74.601}}.

\bibitem{Rob80}
J.~D. Roberts.
\newblock Linear model reduction and solution of the algebraic {R}iccati
  equation by use of the sign function.
\newblock {\em Internat. J. Control}, 32(4):677--687, 1980.
\newblock \href {http://dx.doi.org/10.1080/00207178008922881}
  {\path{doi:10.1080/00207178008922881}}.

\bibitem{ShaDYetal16}
M~Shao, F.~H. da~Jornada, C.~Yang, J.~Deslippe, and S.~G. Louie.
\newblock Structure preserving parallel algorithms for solving the
  {B}ethe-{S}alpeter eigenvalue problem.
\newblock {\em Linear Algebra Appl.}, 488:148--167, 2016.
\newblock \href {http://dx.doi.org/10.1016/j.laa.2015.09.036}
  {\path{doi:10.1016/j.laa.2015.09.036}}.

\bibitem{Sin06}
S.~Singer.
\newblock Indefinite {QR} factorization.
\newblock {\em {BIT} Numer. Math.}, 46(1):141--161, 2006.
\newblock \href {http://dx.doi.org/10.1007/s10543-006-0044-5}
  {\path{doi:10.1007/s10543-006-0044-5}}.

\bibitem{SinS00}
S~Singer and S.~Singer.
\newblock Rounding-error and perturbation bounds for the indefinite {$QR$}
  factorization.
\newblock In {\em Proceedings of the {I}nternational {W}orkshop on {A}ccurate
  {S}olution of {E}igenvalue {P}roblems ({U}niversity {P}ark, {PA}, 1998)},
  volume 309, pages 103--119, 2000.
\newblock \href {http://dx.doi.org/10.1016/S0024-3795(99)00156-1}
  {\path{doi:10.1016/S0024-3795(99)00156-1}}.

\bibitem{SukLEetal19}
D.~Sukkari, H.~Ltaief, A.~Esposito, and D.~Keyes.
\newblock A {QDWH}-based {SVD} software framework on distributed-memory
  manycore systems.
\newblock {\em {ACM} Trans. Math. Software}, 45(2):Art. 18, 21, 2019.
\newblock \href {http://dx.doi.org/10.1145/3309548}
  {\path{doi:10.1145/3309548}}.

\bibitem{SunQ04}
X.~Sun and E.~S. Quintana-Ort\'{\i}.
\newblock Spectral division methods for block generalized {S}chur
  decompositions.
\newblock {\em Math. Comp.}, 73(248):1827--1847, 2004.
\newblock \href {http://dx.doi.org/10.1090/S0025-5718-04-01667-9}
  {\path{doi:10.1090/S0025-5718-04-01667-9}}.

\bibitem{Ves11}
K.~Veseli\'{c}.
\newblock {\em Damped oscillations of linear systems}, volume 2023 of {\em
  Lecture Notes in Mathematics}.
\newblock Springer-Verlag, Berlin Heidelberg, 2011.
\newblock \href {http://dx.doi.org/10.1007/978-3-642-21335-9}
  {\path{doi:10.1007/978-3-642-21335-9}}.

\bibitem{Wat07}
D.~Watkins.
\newblock {\em The Matrix Eigenvalue Problem}.
\newblock SIAM, 2007.
\newblock \href {http://dx.doi.org/10.1137/1.9780898717808}
  {\path{doi:10.1137/1.9780898717808}}.

\bibitem{YamNYF15}
Y.~Yamamoto, Y.~Nakatsukasa, Y.~Yanagisawa, and T.~Fukaya.
\newblock Roundoff error analysis of the {C}holesky {QR2} algorithm.
\newblock {\em ETNA}, 44:306--326, 2015.

\end{thebibliography}

\end{document}